\pgfplotsset{compat=newest}
\newcommand\newsubcap[1]{\phantomcaption%
       \caption*{\figurename~\thefigure\thesubfigure: #1}}
\titleformat*{\section}{\large\bfseries} 
\newtheorem{lemma}{Lemma} 
\numberwithin{lemma}{section} 
\newtheorem{definition}[lemma]{Definition} 
\newtheorem{theorem}[lemma]{Theorem} 
\newtheorem{corollary}[lemma]{Corollary} 
\newtheorem{remark}[lemma]{Remark} 
\DeclareMathOperator*{\argmin}{\arg\!\min} 
\DeclareMathOperator{\spn}{span} 
\DeclareMathOperator{\Harm}{Harm}
\newcommand{\vc}[1]{\bm{#1}} 
\newcommand{\mt}[1]{\bm{#1}} 
\newcommand{\SO}[1]{\mathcal{SO}(#1)} 
\newcommand{\inv}[1]{{#1}^{-1}} 
\newcommand{\mapping}[3]{{#1}\colon{#2}\to{#3}} 
\newcommand{\concat}[2]{{#1}\circ{}{#2}} 
\newcommand{\concatt}[3]{{#1}\circ{}{#2}\circ{}{#3}} 
\newcolumntype{C}[1]{>{\centering\arraybackslash}p{#1}}
\let\@@pmod\pmod
\DeclareRobustCommand{\pmod}{\@ifstar\@pmods\@@pmod}
\def\@pmods#1{\mkern4mu({\operator@font mod}\mkern 6mu#1)}
\newif\ifmarkedits
\newcommand{\edited}[1]{%
  \ifmarkedits
    {\color{red}#1}%
  \else
    #1%
  \fi
}
\begin{document}
\setcounter{section}{0} 
\setcounter{subsection}{1}
\include{design/metadata}	
\newcommand{\sA}{{\mathcal A}}
\newcommand{\sB}{{\mathcal B}}
\newcommand{\sC}{{\mathcal C}}
\newcommand{\sD}{{\mathcal D}}
\newcommand{\sE}{{\mathcal E}}
\newcommand{\sF}{{\mathcal F}}
\newcommand{\sG}{{\mathcal G}}
\newcommand{\sH}{{\mathcal H}}
\newcommand{\sI}{{\mathcal I}}
\newcommand{\sJ}{{\mathcal J}}
\newcommand{\sK}{{\mathcal K}}
\newcommand{\sL}{{\mathcal L}}
\newcommand{\sM}{{\mathcal M}}
\newcommand{\sN}{{\mathcal N}}
\newcommand{\sO}{{\mathcal O}}
\newcommand{\sP}{{\mathcal P}}
\newcommand{\sQ}{{\mathcal Q}}
\newcommand{\sR}{{\mathcal R}}
\newcommand{\sS}{{\mathcal S}}
\newcommand{\sT}{{\mathcal T}}
\newcommand{\sU}{{\mathcal U}}
\newcommand{\sV}{{\mathcal V}}
\newcommand{\sW}{{\mathcal W}}
\newcommand{\sX}{{\mathcal X}}
\newcommand{\sY}{{\mathcal Y}}
\newcommand{\sZ}{{\mathcal Z}}

\newcommand{\scrA}{{\mathscr A}}
\newcommand{\scrB}{{\mathscr B}}
\newcommand{\scrC}{{\mathscr C}}
\newcommand{\scrD}{{\mathscr D}}
\newcommand{\scrE}{{\mathscr E}}
\newcommand{\scrF}{{\mathscr F}}
\newcommand{\scrG}{{\mathscr G}}
\newcommand{\scrH}{{\mathscr H}}
\newcommand{\scrI}{{\mathscr I}}
\newcommand{\scrJ}{{\mathscr J}}
\newcommand{\scrK}{{\mathscr K}}
\newcommand{\scrL}{{\mathscr L}}
\newcommand{\scrM}{{\mathscr M}}
\newcommand{\scrN}{{\mathscr N}}
\newcommand{\scrO}{{\mathscr O}}
\newcommand{\scrP}{{\mathscr P}}
\newcommand{\scrQ}{{\mathscr Q}}
\newcommand{\scrR}{{\mathscr R}}
\newcommand{\scrS}{{\mathscr S}}
\newcommand{\scrT}{{\mathscr T}}
\newcommand{\scrU}{{\mathscr U}}
\newcommand{\scrV}{{\mathscr V}}
\newcommand{\scrW}{{\mathscr W}}
\newcommand{\scrX}{{\mathscr X}}
\newcommand{\scrY}{{\mathscr Y}}
\newcommand{\scrZ}{{\mathscr Z}}

\newcommand{\A}{{\mathbb A}}
\newcommand{\B}{{\mathbb B}}
\newcommand{\C}{{\mathbb C}}	
\newcommand{\D}{{\mathbb D}}
\newcommand{\E}{{\mathbb E}}
\newcommand{\F}{{\mathbb F}}
\newcommand{\G}{{\mathbb G}}	
\newcommand{\HH}{{\mathbb H}}
\newcommand{\I}{{\mathbb I}}
\newcommand{\J}{{\mathbb J}}
\renewcommand{\L}{{\mathbb L}}
\newcommand{\M}{{\mathbb M}}
\newcommand{\K}{{\mathbb K}}
\newcommand{\N}{{\mathbb N}}
\renewcommand{\P}{{\mathbb P}}
\newcommand{\Q}{{\mathbb Q}}
\newcommand{\R}{{\mathbb R}}
\renewcommand{\S}{{\mathbb S}}
\newcommand{\T}{{\mathbb T}}
\newcommand{\U}{{\mathbb U}} 	
\newcommand{\V}{{\mathbb V}}
\newcommand{\W}{{\mathbb W}}
\newcommand{\X}{{\mathbb X}}
\newcommand{\Y}{{\mathbb Y}}
\newcommand{\Z}{{\mathbb Z}}

\newcommand{\liee}{\mathfrak{e}}
\newcommand{\lief}{\mathfrak{f}}
\newcommand{\lieg}{\mathfrak{g}}
\newcommand{\lieh}{\mathfrak{h}}
\newcommand{\liei}{\mathfrak{i}}
\newcommand{\liej}{\mathfrak{j}}
\newcommand{\liek}{\mathfrak{k}}
\newcommand{\liel}{\mathfrak{l}}
\newcommand{\liem}{\mathfrak{m}}
\newcommand{\lien}{\mathfrak{n}}
\newcommand{\liegl}{\mathfrak{gl}}
\newcommand{\liesl}{\mathfrak{sl}}
\newcommand{\liesp}{\mathfrak{sp}}
\newcommand{\lieso}{\mathfrak{so}}
\newcommand{\liesu}{\mathfrak{su}}
\newcommand{\lies}{\mathfrak{s}}
\newcommand{\liet}{\mathfrak{t}}
\newcommand{\lieA}{\mathfrak{A}}
\newcommand{\gotha}{{\mathfrak a}}
\newcommand{\gothb}{{\mathfrak b}}
\newcommand{\gothc}{{\mathfrak c}}
\newcommand{\gothd}{{\mathfrak d}}
\newcommand{\gothe}{{\mathfrak e}}
\newcommand{\gothf}{{\mathfrak f}}
\newcommand{\gothg}{{\mathfrak g}}
\newcommand{\gothh}{{\mathfrak h}}
\newcommand{\gothi}{{\mathfrak i}}
\newcommand{\gothj}{{\mathfrak j}}
\newcommand{\gothk}{{\mathfrak k}}
\newcommand{\gothl}{{\mathfrak l}}
\newcommand{\gothm}{{\mathfrak m}}
\newcommand{\gothn}{{\mathfrak n}}
\newcommand{\gotho}{{\mathfrak o}}
\newcommand{\gothp}{{\mathfrak p}}
\newcommand{\gothq}{{\mathfrak q}}
\newcommand{\gothr}{{\mathfrak r}}
\newcommand{\goths}{{\mathfrak s}}
\newcommand{\gotht}{{\mathfrak t}}
\newcommand{\gothu}{{\mathfrak u}}
\newcommand{\gothv}{{\mathfrak v}}
\newcommand{\gothw}{{\mathfrak w}}
\newcommand{\gothx}{{\mathfrak x}}
\newcommand{\gothy}{{\mathfrak y}}
\newcommand{\gothz}{{\mathfrak z}}
\newcommand{\gothA}{{\mathfrak A}}
\newcommand{\gothB}{{\mathfrak B}}
\newcommand{\gothC}{{\mathfrak C}}
\newcommand{\gothD}{{\mathfrak D}}
\newcommand{\gothE}{{\mathfrak E}}
\newcommand{\gothF}{{\mathfrak F}}
\newcommand{\gothG}{{\mathfrak G}}
\newcommand{\gothH}{{\mathfrak H}}
\newcommand{\gothI}{{\mathfrak I}}
\newcommand{\gothJ}{{\mathfrak J}}
\newcommand{\gothK}{{\mathfrak K}}
\newcommand{\gothL}{{\mathfrak L}}
\newcommand{\gothM}{{\mathfrak M}}
\newcommand{\gothN}{{\mathfrak N}}
\newcommand{\gothO}{{\mathfrak O}}
\newcommand{\gothP}{{\mathfrak P}}
\newcommand{\gothQ}{{\mathfrak Q}}
\newcommand{\gothR}{{\mathfrak R}}
\newcommand{\gothS}{{\mathfrak S}}
\newcommand{\gothT}{{\mathfrak T}}
\newcommand{\gothU}{{\mathfrak U}}
\newcommand{\gothV}{{\mathfrak V}}
\newcommand{\gothW}{{\mathfrak W}}
\newcommand{\gothX}{{\mathfrak X}}
\newcommand{\gothY}{{\mathfrak Y}}
\newcommand{\gothZ}{{\mathfrak Z}}


\newcommand{\abs}[1]{{\left|#1\right|}}
\newcommand{\acts}{\mathrel{\raisebox{0.3ex}{\rotatebox[origin=c]{270}{$\circlearrowright$}}}}
\newcommand{\ab}{{\operatorname{ab}}}
\newcommand{\an}{{\operatorname{an}}}
\newcommand{\Ann}{\operatorname{Ann}}
\newcommand{\Art}{\operatorname{Art}}
\newcommand{\bild}{{\operatorname{bild}}}
\newcommand{\proArt}{{\widehat{\operatorname{Art}}}}
\newcommand{\Aut}{\operatorname{Aut}}
\newcommand{\ba}[1]{\overline{#1}}
\newcommand{\id}{{\rm id}}
\newcommand{\ch}{{\rm ch}}
\newcommand{\car}{\operatorname{char}}
\newcommand{\chern}{{\rm c}}
\newcommand{\codim}{\operatorname{codim}}
\newcommand{\Coh}{\operatorname{Coh}}
\newcommand{\coker}{\operatorname{coker}}
\newcommand{\Der}{\operatorname{Der}}
\newcommand{\di}{\partial}
\newcommand{\End}{\operatorname{End}}
\newcommand{\Ext}{\operatorname{Ext}}
\newcommand{\Gal}{\operatorname{Gal}}
\newcommand{\eins}[1]{{{\mathbf 1}_{#1 \times #1}}}
\newcommand{\eps}{\varepsilon}
\newcommand{\equivalent}{\Leftrightarrow}
\newcommand{\ev}{\operatorname{ev}}
\newcommand{\GL}{{\rm GL}}
\newcommand{\gm}{\nabla^{\rm GM}}
\newcommand{\Gr}{{\rm Gr}}
\newcommand{\Hilb}{{\rm Hilb}}
\newcommand{\Hom}{\operatorname{Hom}}
\newcommand{\hinrichtung}{``$\Longrightarrow$''}
\newcommand{\rueckrichtung}{``$\Longleftarrow$''}
\newcommand{\Iso}{\operatorname{Iso}}
\newcommand{\iso}{\cong}
\renewcommand{\implies}{{\ \Rightarrow\ }}
\renewcommand{\iff}{\Leftrightarrow}
\newcommand{\gdw}{\Longleftrightarrow}
\renewcommand{\Im}{\operatorname{Im}}
\newcommand{\img}{\operatorname{im}}
\newcommand{\into}{{\, \hookrightarrow\,}}
\newcommand{\isom}{\cong}
\newcommand{\limes}[1][]{{\varprojlim_{ #1 }\;}}
\newcommand{\kolimes}[1][]{{\varinjlim_{ #1 }\;}}
\newcommand{\kgv}{\operatorname{kgV}}
\newcommand{\kong}{{\ \equiv \ }}
\newcommand{\length}{\operatorname{length}}
\newcommand{\ggt}{{\rm{ggT}}}
\newcommand{\grad}{{\operatorname{grad}}}
\newcommand{\margincom}[1]{\marginpar{\sffamily\tiny #1}}
\newcommand{\Mor}{\operatorname{Mor}}
\newcommand{\nil}{{\operatorname{nil}}}
\newcommand{\nt}{{\vartriangleleft\ }} 
\newcommand{\normal}{{\vartriangleleft\ }} 
\newcommand{\NS}{\operatorname{NS}}
\newcommand{\Num}{\operatorname{Num}}
\renewcommand{\O}{{\rm O}}
\newcommand{\ob}{{\operatorname{ob}}}
\newcommand{\Ob}{\operatorname{Ob}}
\newcommand{\ohne}{{\ \setminus \ }}
\newcommand{\ohnenull}{{\ \setminus \{0\} }}
\newcommand{\ol}[1]{{\overline{#1}}}
\newcommand{\onto}{{{\twoheadrightarrow}}}
\newcommand{\op}{{\rm op}}
\newcommand{\ord}{{\rm ord}}
\newcommand{\para}{{$\mathsection\,$}}
\newcommand{\Psh}{{\rm Psh}}
\newcommand{\Pic}{\operatorname{Pic}}
\newcommand{\pr}{\operatorname{pr}}
\newcommand{\qis}{\simeq}
\newcommand{\ran}{\operatorname{ran}}
\newcommand{\rank}{{\rm rank}}
\newcommand{\RHom}{\operatorname{RHom}}
\renewcommand{\Re}{\operatorname{Re}}
\newcommand{\ratl}{\dashrightarrow}
\newcommand{\red}{{\operatorname{red}}}
\newcommand{\reg}{{\operatorname{reg}}}
\newcommand{\rg}{\operatorname{rg}}
\newcommand{\rk}{{\rm rk}}
\newcommand{\sing}{{\operatorname{sing}}}
\newcommand{\Sing}{\operatorname{Sing}}
\newcommand{\sgn}{sgn}
\newcommand{\SL}{{\rm SL}}
\newcommand{\Sh}{{\rm Sh}}
\newcommand{\socle}{{\rm socle}}
\newcommand{\Schm}{{\rm Sch }}
\newcommand{\Sch}[1]{{\rm Sch / #1}}
\newcommand{\Set}{{\rm Set}}
\newcommand{\sep}{{{\rm sep}}}
\newcommand{\Sp}{{\rm Sp}}
\newcommand{\Spec}{{\operatorname{Spec}}}
\newcommand{\Spm}{{\operatorname{Spm}}}
\newcommand{\Spin}{{\rm Spin}}
\newcommand{\st}{{\operatorname{1st}}}
\newcommand{\SU}{{\rm SU}}
\newcommand{\supp}{\operatorname{supp}}
\newcommand{\Sym}{{\rm Sym}}
\renewcommand{\to}[1][]{\xrightarrow{\ #1\ }}
\newcommand{\Tor}{{\rm Tor}}
\newcommand{\Tors}{{\rm Tors}}
\newcommand{\tensor}{\otimes}
\newcommand{\tOm}{\widetilde{\Omega}}
\newcommand{\tr}{{\rm tr}}
\newcommand{\td}{{\rm td}}
\newcommand{\tY}{{\widetilde{Y}}}
\newcommand{\tsY}{{\widetilde{\sY}}}
\newcommand{\veps}{\varepsilon}
\newcommand{\vol}{\operatorname{vol}}
\newcommand{\vphi}{\varphi}
\newcommand{\vrho}{{\varrho}}
\newcommand{\vthe}{\vartheta}
\renewcommand{\U}{{\rm U}}
\newcommand{\ul}[1]{{\underline{#1}}}
\newcommand{\wh}[1]{{\widehat{#1}}}
\newcommand{\wlfun}[1]{{\underline{#1_{\wl}}}}
\newcommand{\wt}[1]{{\widetilde{#1}}}
\newcommand{\Zar}{{\rm Zar}}



\title{An Optimal Ansatz Space for \\Moving Least Squares Approximation on
  Spheres}

\author{Ralf
  Hielscher\orcidlink{0000-0002-6342-1799}\thanks{ralf.hielscher@math.tu-freiberg.de}~,~
  Tim Pöschl\orcidlink{0009-0006-8313-9850}\thanks{tim.poeschl@math.tu-freiberg.de}}

\date{Institute for Applied Analysis, TU Bergakademie Freiberg \\
  September 30, 2024}

\maketitle
\begin{abstract}
  We revisit the moving least squares (MLS)
  approximation scheme on the sphere $\mathbb S^{d-1} \subset \R^d$, where
  $d>1$. It is well known that using the spherical harmonics up to degree
  $L \in \N$ as ansatz space yields for functions in
  $\sC^{L+1}(\mathbb S^{d-1})$ the approximation order
  $\sO \left( h^{L+1} \right)$, where $h$ denotes the fill distance of the
  sampling nodes.

  In this paper we show that the dimension of the ansatz space can be almost
  halved, by including only spherical harmonics of even or odd degree up to $L$,
  while preserving the same order of approximation. Numerical experiments
  indicate that using the reduced ansatz space is essential to ensure the
  numerical stability of the MLS approximation scheme as $h \to 0$.  Finally, we
  compare our approach with an MLS approximation scheme that uses polynomials on
  the tangent space of the sphere as ansatz space.
\end{abstract}


\ \newline\noindent\textbf{Acknowledgements}
This version of the article has been accepted for publication, after peer
review, but is not the Version of Record and does not reflect
post-acceptance improvements, or any corrections. The Version of Record is
available online at: \url{http://dx.doi.org/10.1007/s10444-024-10201-z}.

\section{Introduction}

Spherical data appear in many different scientific applications, such as
geography and geodesy, astronomy, computer graphics and even in biology in
protein docking simulation. This explains the need for fast and stable
algorithms for the reconstruction of spherical functions from discrete data.

One approach is to use spherical harmonics up to a fixed degree as a global
ansatz space and to determine the coefficients by minimizing some error
functional, see e.g.\ \cite{AtHa12}. The choice of the polynomial degree as
well as of the error functional are crucial to ensure the stability of the
approximation process \cite{FiTh06,Ku07,filbir2023marcinkiewiczzygmund}.  A
second issue is the efficient numerical solution of the minimization problem
which has been tackled, e.g.\ with the fast nonequispaced spherical Fourier
transform \cite{potts.1998, potts.2008} or the double Fourier sphere method
\cite{doublesphere, doulesphereNdim}. A completely different approach are
local approximation schemes, where many local approximations are computed
instead of one global one. This avoids the costly solution of a large system
of equations and stability can be resolved locally. Radial basis function
approximation is a method that can be applied in a global scheme
\cite{Golitschek.2001,KurtJetter.1999} as well as in a local one
\cite{rbflocal}.

In this paper we revisit the moving least squares (MLS) approximation scheme
\cite{Levin.1998,Sober.,Wendland.2001b,Wendland.2001} on the Euclidean sphere
which is a purely local method. Assume that $f \colon \S^{d-1} \to \C$ is a
function from the $d-1$ dimensional sphere
$\S^{d-1} = \{y \in \R^{d} \mid \lVert y \rVert_{2} = 1\}$ to the complex numbers
with known values at the nodes $y_{1},\dots,y_{N} \in \S^{d-1}$. Let
$w \colon \S^{d-1} \times \S^{d-1} \to \R_{\geq 0}$ denote a weight function
and let $\sG \subset \sC \left( \S^{d-1} \right)$ be a finite dimensional,
linear ansatz space over $\C$. Under mild assumptions, cf.\ \cite{Levin.1998},
there exists a unique function $g_y \in \sG$ solving the weighted least
squares problem
\begin{align}
  \label{eq:mls standard formulation}
  \min\limits_{g \in \sG} \sum\limits_{i=1}^N w(y_i,y) \vert f(y_i) - g(y_i) \vert^2.
\end{align}
The ansatz space $\sG$ is usually chosen very low dimensional, which allows
the weight function $w(\cdot,y)$ to be localized around $y \in \S^{d-1}$ with only
few nodes $y_{i}$ close to the center $y$ satisfying $w(y_{i},y)>0$. As a result $g_{y}$ is
only a local approximation to $f$ that is very fast to compute. In order to
guarantee the stable solution of \eqref{eq:mls standard formulation}, the weight
function needs to be carefully adapted to the nodes $y_{i}$ and the function
space $\sG$.

The MLS approximation $\sM f(y)$ of $f$ is obtained by solving the
local minimization problem \eqref{eq:mls standard formulation} separately for
each evaluation point $y \in \S^{d-1}$ and setting $\sM f(y) \coloneqq g_y(y)$. An
overview on the properties of the MLS approximation can be found in
\cite[chapter 22-25]{Fasshauer.2008} and in \cite{Levin.1998}.

In our paper we consider the asymptotic behavior of the approximation error
\begin{equation}
  \label{eq:sup norm}
  \| f - \sM f \|_\infty
  = \sup\limits_{y \in \S^{d-1}} \vert f(y) - \sM f(y) \vert
\end{equation}
of the MLS approximation as the number $N$ of nodes increases, which
simultaneously allows the weight function $w$ to become more and more localized.
In order to bound the approximation error \eqref{eq:sup norm}, the nodes $y_{i}$
need to be distributed sufficiently uniformly in $\S^{d-1}$. The uniformity
of the nodes can be quantified by the ratio of their separation distance $q$ and
their fill distance $h$, see \Cref{def:grid properties}, which refer to the
smallest distance between two nodes and the radius of the biggest hole,
respectively. For $\Omega \subset \R^d$ it is well known, cf.\ \cite{Levin.1998,
  Wendland.2001b}, that for $f \in C^{L+1}(\Omega)$ and $h \to 0$ we
have $\| f - \sM f \|_\infty = \sO \left( h^{L+1} \right)$, if the uniformity
$h/q$ is bounded by some constant $c$ and $\sG$ consists of all polynomials up
to degree $L$.

A similar result for $\S^{d-1}$ was shown by Wendland \cite{Wendland.2001}.
In his work he proved the asymptotic bound
$\| f - \sM f \|_\infty = \sO \left( h^{L+1} \right)$ whenever
$f \in C^{L+1}(\S^{d-1})$ and $\sG$ consists of all spherical harmonics up to
degree $L$.

In our paper we are going to show, that the dimension of the ansatz space
$\sG$ can be almost halved while preserving the same order of approximation.
To be precise, if $L$ is even it will be sufficient to only use the spherical
harmonics of even degree up to $L$, and if $L$ is odd it will be sufficient to
only use the spherical harmonics of odd degree up to $L$, in order to obtain
approximation order $\sO \left( h^{L+1} \right)$ for $h \to 0$. This is
possible, because the spherical harmonics up to degree $L$ are linearly
independent from a global point of view, but they asymptotically become linearly
dependent on a local scale. This is not the case anymore if we use only spherical
harmonics of every second degree. Therefore, the smaller ansatz space also increases
the numerical stability.

An alternative idea to keep the ansatz space $\sG$ well suited for approximation
problems that are more and more localized around some center $y$, is to allow it
to vary with $y$ as well, i.e.\ to solve \eqref{eq:mls standard formulation} with
respect to a local ansatz space $\sG_{y}$. A natural choice for such an ansatz
space is the linear space consisting of the pullback functions of polynomials
on the tangent space to the sphere. This approach was
already discussed for general manifolds in \cite{Sober.}. We show, that this
method possesses the same approximation order $\sO \left( h^{L+1} \right)$ for
$h \to 0$ as our global ansatz space.

Our findings are organized as follows. In 
\cref{sec:MLS} we give a quick introduction into moving least
squares approximation. The essential point is that the approximation error can
be bounded by the product of some Lebesgue constant and the error of the local
best approximation, cf.\ \Cref{lem:MLS error estimate}. Local
approximation on the sphere is discussed in \cref{sec:localApprox}. In
particular, we prove in \Cref{lem:Phi(G) = all polynomials up to degree
  L} conditions on the ansatz space $\mathcal G$, that guarantee that the error
of the local best approximation of a function $f \in C^{L+1}(\S^{d-1})$ within
a $\delta$-ball around $y$ decays as $\mathcal O(\delta^{L+1})$. Subsequently,
we show that these conditions are satisfied by the spaces of spherical harmonics 
of even, respectively odd, degree up to $L$ and summarize in \Cref{thm:main result}
the local approximation properties of both spaces.

\Cref{sec:lebesgue-constant} is devoted to the computation of the
Lebesgue constant for the spaces of spherical harmonics of even, respectively odd,
degree up to $L$ generalizing a result of Wendland, cf.\ \cite{Wendland.2001}.
In \Cref{thm:lebesgue constant bounded} we give conditions on the separation 
distance and on the fill distance of the nodes $y_{i}$, as well as on
the localization of the weight function $w$, such that the Lebesgue constant is
uniformly bounded. Combining all these ingredients we end up with our final
approximation result in \Cref{cor:approx order of MLS}.

\Cref{sec:tangentSpace} considers the case of a local ansatz
space $\mathcal G_{y}$, which consists of the pullbacks functions of polynomials
on the tangent space to the sphere. The two important factors, namely the error
of the local best approximation and the Lebesgue constant, are estimated in
\Cref{lem:tangent local approx order L+1} and \Cref{lem:tangent lebesgue
  constant bounded}, respectively. The final approximation result can be found in
\Cref{cor:tanget_space}.

Eventually, in \cref{sec:numerics}, we numerically compare the three
choices of the ansatz space: all spherical harmonics up to degree $L$, only the
even, respectively odd, spherical harmonics up to degree $L$, and all polynomials up to degree $L$ on the
tangent space of the sphere. Our numerical experiments indicate, that while all three approaches attain 
the approximation error proven in this paper, the latter two approaches result in much more stable
algorithms, especially if the basis is chosen carefully.

 
\section{Moving Least Squares Approximation}
\label{sec:MLS}

We start by giving an explicit definition of moving least squares approximation
which dates back to Shepard \cite{shepard68}.

\begin{definition}
  \label{def:def mls}
  Let a function $f \colon \S^{d-1} \to \C$ and a center $y \in \S^{d-1}$ be
  given. Further, let $y_{1}, \dots, y_{N} \in \S^{d-1}$ be nodes,
  $\sG = \spn \left\{ g_{1},\dots,g_{M} \colon \S^{d-1} \to \R \right\}$
  a linear ansatz space over $\C$ and
  $w \colon \S^{d-1} \times \S^{d-1} \to \R_{\geq 0}$ a weight function, such
  that the minimization problem
  \begin{equation}
    \label{eq:def mls}
    \min\limits_{g \in \sG} \, \sum\limits_{i=1}^{N}w(y,y_{i}) \, \vert  f(y_{i}) - g(y_{i}) \vert^{2}
  \end{equation}
  possesses a unique solution $g_{y} \in \sG$. Then
  $\sM f(y) \coloneqq g_{y} (y)$ is the MLS approximation of $f$ in $y$.
\end{definition}

\begin{remark}
  Of course the ansatz space $\sG$ can be chosen as a linear space over $\R$
  instead of $\C$ if $f$ is real-valued. Throughout the paper we will not
  distinguish between both cases. The ansatz space and all polynomial spaces
  defined throughout this paper should always be seen as linear spaces over the
  same field that $f$ maps into.
\end{remark}

It is easy to check that the solution of \eqref{eq:def mls} exists and is
unique, if and only if the nodes
$\left\{ y_{i} \, \big\vert \, w(y_{i},y) > 0\right\}$ are unisolvent with
respect to $\sG$, that is $g \equiv 0$ is the only function in $\sG$ that
vanishes on all these nodes. In this case, the MLS approximation coincides
with the Backus-Gilbert approximation \cite{backus67, bos89:_movin}. For a
proof of the following theorem see \cite[Prop. 1]{Levin.1998}.

\begin{theorem}[Backus-Gilbert Approximation]
  \label{thm:mls and backus gilbert}
  The MLS approximation can be written as
  \begin{equation}
    \label{eq:mls backus gilbert}
    \sM f(y) = \sum_{w(y_i,y) > 0} a_i^{\ast}(y) f(y_i) ,
  \end{equation}
  where the coefficients $a_i^\ast (y) \in \C$ are the unique solution of the
  minimization problem
  \begin{align}
    &\min\limits_{a_i(y) \in \C} \sum\limits_{w(y_i,y) > 0} \frac{1}{w(y_i,y)} \vert a_i(y) \vert^2
      \label{eq:backus gilbert target function} \\
    \text{subject to} \quad
    &\sum\limits_{w(y_i,y) > 0} a_i(y) \, g(y_i) = g(y) , \quad g \in \sG .
      \label{eq:backus gilbert reconstruction constraints}
  \end{align}
\end{theorem}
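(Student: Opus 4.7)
My plan is to verify three properties of the MLS output, one for each ingredient of the Backus-Gilbert statement: a linear representation, the reproduction constraint, and optimality with respect to the weighted $\ell^2$ functional.

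First, since \eqref{eq:def mls} is a positive semidefinite quadratic form in the data vector $(f(y_i))_i$ with a unique minimizer by assumption, the map $(f(y_i))_i \mapsto g_y$ is linear, and only the entries with $w(y_i,y) > 0$ enter. Evaluating at $y$ therefore gives $\sM f(y) = \sum_{w(y_i,y)>0} a_i^\ast(y) f(y_i)$ for coefficients $a_i^\ast(y) \in \C$ depending only on $y$, the nodes and $\sG$; this already establishes \eqref{eq:mls backus gilbert}. Substituting $f = g$ for any $g \in \sG$ makes the objective in \eqref{eq:def mls} vanish at $g_y = g$, so by uniqueness $\sM g(y) = g(y)$, and the representation just derived forces $\sum_i a_i^\ast(y) g(y_i) = g(y)$, which is precisely \eqref{eq:backus gilbert reconstruction constraints}.

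The main step is optimality. I would compute $a_i^\ast(y)$ explicitly from the MLS normal equations. Writing $g_y = \sum_k c_k g_k$ and differentiating \eqref{eq:def mls} with respect to $\overline{c_k}$ yields the Hermitian Gram system $G c = b$, where $G_{kl} = \sum_i w(y_i,y) \overline{g_k(y_i)} g_l(y_i)$ is invertible by the unisolvence of the active nodes and $b_k = \sum_i w(y_i,y) \overline{g_k(y_i)} f(y_i)$. Substituting $c = G^{-1} b$ into $g_y(y) = \sum_k c_k g_k(y)$ and reading off the coefficient of $f(y_i)$ produces the key representation
\begin{equation*}
  a_i^\ast(y) \;=\; w(y_i,y)\, \overline{h_y(y_i)},
\end{equation*}
where $h_y \in \sG$ is the element whose coefficient vector in the $g_l$-basis is $G^{-1}(g_1(y),\dots,g_M(y))^\top$. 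In other words, $a_i^\ast(y)/w(y_i,y)$ restricted to the active nodes is the pointwise conjugate of an element of $\sG$.

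With this representation, optimality reduces to a one-line orthogonality argument. For any admissible $\tilde a$ satisfying \eqref{eq:backus gilbert reconstruction constraints}, set $b_i \coloneqq \tilde a_i - a_i^\ast(y)$; subtracting the two reproduction identities yields $\sum_i b_i g(y_i) = 0$ for every $g \in \sG$. Taking $g = h_y$ and conjugating gives $\sum_i \overline{b_i}\, a_i^\ast(y)/w(y_i,y) = 0$, hence
\begin{equation*}
  \sum_i \frac{|\tilde a_i|^2}{w(y_i,y)} \;=\; \sum_i \frac{|a_i^\ast(y)|^2}{w(y_i,y)} + \sum_i \frac{|b_i|^2}{w(y_i,y)} \;\geq\; \sum_i \frac{|a_i^\ast(y)|^2}{w(y_i,y)},
\end{equation*}
with equality exactly when $\tilde a = a^\ast(y)$, so $a^\ast(y)$ is the unique minimizer of \eqref{eq:backus gilbert target function}. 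The only delicate point I anticipate is the complex-conjugation bookkeeping needed to confirm that $a_i^\ast(y)/w(y_i,y)$ really has the form $\overline{h_y(y_i)}$ for some $h_y \in \sG$; once this identification is in place, linearity, reproduction and the orthogonality step are essentially immediate.
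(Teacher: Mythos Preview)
The paper does not actually prove this theorem; it simply cites \cite[Prop.~1]{Levin.1998}. Your argument is correct and is essentially the standard route: solve the weighted least-squares normal equations, read off the explicit representation $a_i^\ast(y) = w(y_i,y)\,\overline{h_y(y_i)}$ with $h_y \in \sG$, and then use the resulting orthogonality $\sum_i b_i\,h_y(y_i)=0$ to obtain the Pythagorean identity that proves optimality and uniqueness in the Backus--Gilbert functional.

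One remark: your flagged concern about complex-conjugation bookkeeping disappears in this paper's setting, because Definition~\ref{def:def mls} takes the basis functions $g_1,\dots,g_M$ to be \emph{real-valued} (with the span taken over $\C$). Consequently the Gram matrix $G$ is real symmetric, the vector $(g_1(y),\dots,g_M(y))$ is real, and $h_y$ has real coefficients; hence $a_i^\ast(y) = w(y_i,y)\,h_y(y_i)$ without any conjugate, and the orthogonality step goes through directly. In a genuinely complex-basis setting your formula for the coefficient vector of $h_y$ would need a conjugate-transpose in the right place, but that is irrelevant here.
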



From \eqref{eq:backus gilbert reconstruction constraints} we immediately
conclude that $\sM f = f$ for all $f \in \sG$. This formulation of the MLS
approximation is also called the Backus-Gilbert approximation and enables us to
state an explicit upper bound for the error of the MLS approximation. This upper
bound will depend on the error of the local best approximation of $f$.

Let us introduce some notation. The distance between two points
$y,z \in \S^{d-1}$ on the sphere is measured via the great circle distance
\[ d(y,z) = \arccos \left( \langle y,z \rangle \right) , \]
where $\langle y,z \rangle = y_1 z_1 + \dots + y_d z_d$ denotes the euclidean
inner product on $\R^d$. The open spherical cap $C_{\delta}(y)$ with radius
$\delta > 0$ around the center $y \in \S^{d-1}$ is defined as
\[ C_{\delta}(y) \coloneqq \left\{ z \in \S^{d-1} \, \middle\vert \, d(y,z) < \delta \right\} . \]

\begin{definition}
  Let $f \colon \S^{d-1} \to \C$ be a spherical function, $y \in \S^{d-1}$ and
  $\delta>0$.  Then the error of the local best approximation of $f$ from
  $\sG$ on the spherical cap $C_{\delta}(y)$ is defined as
  \begin{equation}
    \label{eq:error of local best
      approximation} E_{\delta}(\sG,f,y) \coloneqq \inf\limits_{g \in \sG} \sup\limits_{z \in C_{\delta}(y)} \vert f(z) - g(z) \vert
  \end{equation}
\end{definition}

A proof of the following theorem can be found in \cite{Fasshauer.2008,
  Levin.1998}.

\begin{theorem}\label{lem:MLS error estimate}
  Let $\sM f(y)$ be the MLS approximation of $f$ in $y \in \S^{d-1}$ and let
  \[ \delta \coloneqq \max\limits_{w(y_i,y) > 0} d(y_{i},y) \]
  denote the largest distance to the center $y$ among all nodes with positive
  weight. The error of the MLS approximation is bounded by
  \begin{align}\label{eq:MLS error estimate}
    \vert f(y) - \sM f(y) \vert \leq E_{\delta}(\sG,f,y)
    \left( 1 + \sum\limits_{w(y_i,y) > 0} \vert a_i^{\ast}(y) \vert \right) .
  \end{align}
\end{theorem}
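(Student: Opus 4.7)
The plan is to use the Backus–Gilbert representation from \Cref{thm:mls and backus gilbert} together with the standard add–subtract–and–triangle trick, exploiting the fact that the $a_i^\ast(y)$ reproduce every element of $\sG$ exactly.

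First, I invoke \eqref{eq:mls backus gilbert} to write $\sM f(y) = \sum_{w(y_i,y)>0} a_i^\ast(y) f(y_i)$. Fixing an arbitrary $g \in \sG$, the reproducing identity \eqref{eq:backus gilbert reconstruction constraints} gives $g(y) = \sum_{w(y_i,y)>0} a_i^\ast(y) g(y_i)$. Adding and subtracting $g(y)$ and $\sum_i a_i^\ast(y) g(y_i)$ yields
\begin{equation*}
  f(y) - \sM f(y) = \bigl(f(y) - g(y)\bigr) + \sum_{w(y_i,y)>0} a_i^\ast(y)\bigl(g(y_i) - f(y_i)\bigr).
\end{equation*}
Applying the triangle inequality then gives
\begin{equation*}
  \vert f(y) - \sM f(y) \vert \leq \vert f(y) - g(y) \vert + \sum_{w(y_i,y)>0} \vert a_i^\ast(y) \vert \, \vert f(y_i) - g(y_i) \vert.
\end{equation*}

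Next, by the definition of $\delta$ every summation index $i$ satisfies $d(y_i, y) \leq \delta$, and obviously $d(y, y) = 0 < \delta$, so both $y$ and each active $y_i$ lie in the closure of $C_\delta(y)$. Since $f$ and $g$ are continuous, each of the terms $\vert f(y) - g(y) \vert$ and $\vert f(y_i) - g(y_i) \vert$ is bounded by $\sup_{z \in C_\delta(y)} \vert f(z) - g(z) \vert$ (passing from the closed to the open cap via continuity). Factoring this supremum out gives
\begin{equation*}
  \vert f(y) - \sM f(y) \vert \leq \Bigl(\sup_{z \in C_\delta(y)} \vert f(z) - g(z) \vert\Bigr)\Bigl(1 + \sum_{w(y_i,y)>0} \vert a_i^\ast(y) \vert\Bigr).
\end{equation*}

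Finally, since $g \in \sG$ was arbitrary and the factor $1 + \sum_i \vert a_i^\ast(y) \vert$ does not depend on $g$, taking the infimum over $g \in \sG$ replaces the supremum on the right-hand side by $E_\delta(\sG, f, y)$ and yields \eqref{eq:MLS error estimate}. There is essentially no hard step here; the only mild subtlety is the boundary issue for the index $i$ attaining $d(y_i, y) = \delta$, which is resolved by continuity as noted above.
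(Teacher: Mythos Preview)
Your proof is correct and is precisely the standard argument for this estimate. The paper itself does not give a proof of this theorem but simply refers to \cite{Fasshauer.2008, Levin.1998}, where exactly the argument you wrote---insert an arbitrary $g\in\sG$, use the reproduction property \eqref{eq:backus gilbert reconstruction constraints}, apply the triangle inequality, and take the infimum over $g$---is presented. Your handling of the boundary case $d(y_i,y)=\delta$ via continuity is the right way to reconcile the open cap in the definition of $E_\delta$ with the non-strict inequality in the definition of $\delta$.
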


The sum $\sum_{w(y_i,y) > 0} \vert a_i^{\ast}(y) \vert$ is called the
Lebesgue constant.

\edited{
  In order to analyze the asymptotic behavior of the MLS approximation we assume
  that the number of nodes $N$ goes to infinity while, simultaneously,
  a certain uniformity condition is satisfied, as specified in detail in
  \cref{sec:lebesgue-constant}.
  Furthermore, we adjust the weight function $w$ in dependency of the number of
  nodes $N$ such that for each $y$ the number of nodes $y_{i}$ with positive
  weight $w(y_{i},y_{i})$ remains approximately constant. Thus, asymptotically,
  the weight function $w$ becomes more and more localized and hence $\delta$
  converges to 0 in the above theorem.
}

It is easy to check, that $\delta \to 0$ also implies
$E_{\delta}(\sG,f,y) \to 0$ and the error $\vert \sM f(y) - f(y) \vert$ inherits this
decay if at the same time the Lebesgue constant remains bounded. In the
following section we will find for every $L \in \N_{0}$ an ansatz space $\sG$,
such that $E_{\delta}(\sG,f,y) = \sO \left( \delta^{L+1} \right)$ for
$\delta \to 0$. In \cref{sec:lebesgue-constant} we give sufficient conditions on
the weight function and on the nodes, which guarantee that the Lebesgue constant
is uniformly bounded for all centers $y \in \S^{d-1}$. Overall this will give the
desired approximation order
$\| \sM f - f \|_{\infty} = \sO \left( \delta^{L+1} \right)$ of the MLS
approximation.


\section{A Minimal Ansatz Space for Local Approximation on Spheres}
\label{sec:localApprox} 

The goal of this section is to find for given $L \in \N_0$ an ansatz space
$\sG \subset \sC^{\infty}(\S^{d-1})$, such that for all functions
$f \in \sC^{L+1}(\S^{d-1})$ and arbitrary center $y \in \S^{d-1}$ the error of the local
best approximation of $f$ on the spherical cap $C_{\delta}(y)$ with functions
from $\sG$ decays with order $\sO \left( \delta^{L+1} \right)$ for $\delta \to 0$, i.e.\
\begin{align}\label{eq:desired order of
approximation} E_\delta \left( \sG,f,y \right) = \sO \left( \delta^{L+1} \right) .
\end{align} Additionally the dimension of $\sG$ should be as small as possible.

Our idea is to utilize Taylor series to obtain the desired result. This requires
a local projection between the sphere $\S^{d-1}$ and its tangent space
$\R^{d-1}$. Thanks to symmetry it will be sufficient to consider the
following projection between $\R^{d-1}$ and the northern hemisphere, which is
just the spherical cap $C_{\frac {\pi}2}(e_{d})$ with radius $\frac {\pi} 2$,
centered around the north pole  $e_d = (0,\dots,0,1) \in \S^{d-1}$.

\begin{definition}\label{def:projection of points}
  The orthogonal projection from the open ball
  $B_1(\vc{0}) \subset \R^{d-1}$ with radius $1$ around $\vc{0} \in \R^{d-1}$ onto
  the open spherical cap $C_{\frac{\pi}{2}}(e_d) \subset \S^{d-1}$ with radius
  $\frac{\pi}{2}$ around the north pole $e_d \in \S^{d-1}$ is defined as
  \begin{equation}\label{eq:projection of points}
    \pi \colon B_{1}(\vc{0}) \to C_{\frac{\pi}{2}}(e_d) , \, (x_1,\dots,x_{d-1})
    \mapsto \left( x_1,\dots,x_{d-1},\sqrt{1-\|x\|_2^2} \right) .
  \end{equation}
\end{definition}

It is easy to check that this projection is bijective with inverse
\begin{equation}\label{eq:inverse projection of points}
  \pi^{-1} \colon C_{\frac{\pi}{2}}(e_d) \to B_1(\vc{0}) , \,
  (y_1,\dots,y_d) \mapsto (y_1,\dots,y_{d-1}) .
\end{equation}
For a given function $f \in \sC(\S^{d-1})$, the projection $\pi$ naturally
defines the pullback function
\[ f^{\ast} \coloneqq \concat{f}{\pi} \colon B_1(\vc{0}) \to \C , \] which
possesses the same order of differentiability as $f$ itself. Indeed, for
$f \in \sC^L(\S^{d-1})$ we obtain
\[ f^{\ast}(x) = f \circ \pi (x) = f \left( x_1,\dots,x_{d-1},\sqrt{1-\|x\|_2^2} \right) \]
and the variable substitution $x_d \mapsfrom \sqrt{1-\|x\|_2^2}$ does not affect
the order of differentiability for $x \in B_1(\vc{0})$. This allows us to
utilize the Taylor series of the pullback function $f^{\ast}$ as a substitute of
the Taylor series of $f$ itself. To this end we denote by
\begin{equation*}
  \Hom_{\ell}(\R^d) = \spn \left\{ \vc{x}^{\vc{\alpha}} \colon \R^d \to \R , \,
    x \mapsto \prod\limits_{i=1}^d x_i^{\alpha_i} \, \middle\vert \, \vc{\alpha} \in \N_0^d , \,
    \vert \vc{\alpha} \vert = \alpha_1 + \dots + \alpha_d = \ell \right\}
\end{equation*}
the space of homogeneous polynomials of degree $\ell \in \N_0$ on $\R^{d}$. The
dimension is given by
\begin{equation}
  \label{eq:dim of hom}
  \dim \left( \Hom_{\ell}(\R^{d}) \right) = \binom{d-1+\ell}{\ell} ,
\end{equation}
see \cite[Proposition 4.2]{Efthimiou.2014}. Further we denote their direct sum
up to degree $L \in \N_{0}$ by
\begin{equation*}
  \sP_L(\R^{d}) := \bigoplus_{\ell=0}^L \Hom_{\ell}(\R^{d})
\end{equation*}
\edited{and the dimension of $\sP_L(\R^{d})$ follows from the hockey-stick identity
\begin{equation}
  \label{eq:dim of sum of poly spaces}
  \dim \left( \sP_L(\R^{d}) \right)
  = \sum\limits_{\ell = 0}^{L} \binom{d-1+\ell}\ell
  = \binom{d-1+L+1}{L} = \binom{d+L}{L} .
\end{equation}
}

\begin{definition}\label{def:taylor of pullback}
For $L \in \N_0$ we define the mapping
\begin{align}\label{eq:definition of mapping
Phi} T_L \colon \sC^L(\S^{d-1}) \to \sP_L(\R^{d-1}) , \, f \mapsto T_L \left( f \right) = \sum\limits_{\vc{\alpha} \in \N_0^{d-1} , \, \vert \vc{\alpha} \vert \leq L} \frac{\left( f^{\ast} \right)^{(\vc{\alpha})}(\vc{0})}{\vc{\alpha}!} \vc{x}^{\vc{\alpha}} , \notag
\end{align} that maps every function $f \in \sC^L(\S^{d-1})$ to the Taylor
series of order $L$ of its pullback $f^{\ast} = \concat{f}{\pi}$.
\end{definition}

Note, that the truncated Taylor series $T_L(g)$ of arbitrary degree $L \in \N_0$
is well defined for all ansatz functions
$g \in \sG \subset \sC^{\infty}(\S^{d-1})$. Investigating the image
$T_L(\sG) \subset \sP_L(\R^{d-1})$ allows us to state a sufficient condition,
which ensures that the error $E_{\delta}(\sG,f,y)$ of the local best
approximation decays with order $\sO \left( \delta^{L+1} \right)$ for $\delta \to 0$.

\begin{lemma}
  \label{lem:Phi(G) = all polynomials up to degree L} Let $L \in \N_0$ be given
and let $\sG \subset \sC^{\infty}(\S^{d-1})$ be an ansatz space, such that
  \begin{enumerate}[label=\arabic*)]
  \item\label{item:rotation invariance} $\sG$ is rotational invariant, i.e.\
$\left\{ g \circ \mt{R} \, \middle\vert \, g \in \sG \right\} = \sG$ for all
rotation matrices $\mt{R} \in \SO{d}$ and
  \item\label{item:TL(G) spans all polynomials} every $p \in \sP_L(\R^{d-1})$ is
the Taylor series of the pullback function $g^{\ast}$ of some $g \in \sG$, i.e.\
          \begin{align} 
            T_L \left( \sG \right) = \sP_L(\R^{d-1}) .
    \end{align}
\end{enumerate} Then for all $f \in \sC^{L+1}(\S^{d-1})$ and $y \in \S^{d-1}$
the error $E_{\delta}(\sG,f,y)$ of the local best approximation of $f$ from the
ansatz space $\sG$ on the spherical cap $C_{\delta}(y)$ vanishes with order
$L+1$ as $\delta$ goes to $0$:
\[ E_{\delta}(\sG,f,y) = \sO(\delta^{L+1}) . \]
\end{lemma}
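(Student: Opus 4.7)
The plan is to reduce to the north-pole case by rotation, then lift the problem into the Euclidean ball $B_{1}(\vc 0)$ via the chart $\pi$, where classical Taylor remainder estimates apply.

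First, I would use hypothesis \ref{item:rotation invariance}. Given an arbitrary center $y\in\S^{d-1}$, pick a rotation $\mt R\in\SO{d}$ with $\mt R\,e_{d}=y$ and set $\tilde f := f\circ\mt R\in\sC^{L+1}(\S^{d-1})$. Since $\sG=\sG\circ\mt R$, one has the identity $E_{\delta}(\sG,f,y)=E_{\delta}(\sG,\tilde f,e_{d})$, so it suffices to prove the claim for the center $y=e_{d}$. In this setting, for $z\in C_{\delta}(e_{d})$ with $\delta<\pi/2$, the chart inverse \eqref{eq:inverse projection of points} satisfies
\[
\bigl\|\pi^{-1}(z)\bigr\|_{2}=\sin\bigl(d(z,e_{d})\bigr)\le\delta,
\]
so $\pi^{-1}\bigl(C_{\delta}(e_{d})\bigr)\subset B_{\delta}(\vc 0)\subset B_{1}(\vc 0)$.

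Second, I would use hypothesis \ref{item:TL(G) spans all polynomials} to match Taylor jets. The pullback $\tilde f^{\ast}=\tilde f\circ\pi$ lies in $\sC^{L+1}(B_{1}(\vc 0))$, so its truncated Taylor polynomial $T_{L}(\tilde f)\in\sP_{L}(\R^{d-1})$ is well defined. By surjectivity of $T_{L}$ restricted to $\sG$, there exists $g\in\sG$ with $T_{L}(g)=T_{L}(\tilde f)$. Consequently, the pullback difference $h:=\tilde f^{\ast}-g^{\ast}\in\sC^{L+1}(B_{1}(\vc 0))$ has vanishing Taylor polynomial of order $L$ at $\vc 0$, and the standard multivariate Taylor remainder estimate yields
\[
\sup_{x\in B_{\delta}(\vc 0)}\bigl|h(x)\bigr|\le C_{\tilde f,g}\,\delta^{L+1},
\]
for some constant $C_{\tilde f,g}$ depending only on the supremum of the $(L+1)$-st derivatives of $\tilde f^{\ast}-g^{\ast}$ on $\overline{B_{1/2}(\vc 0)}$, say.

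Finally, I would transport the bound back to the sphere: for every $z\in C_{\delta}(e_{d})$, writing $x=\pi^{-1}(z)\in B_{\delta}(\vc 0)$,
\[
\bigl|\tilde f(z)-g(z)\bigr|=\bigl|\tilde f^{\ast}(x)-g^{\ast}(x)\bigr|\le C_{\tilde f,g}\,\delta^{L+1},
\]
so $E_{\delta}(\sG,\tilde f,e_{d})\le C_{\tilde f,g}\,\delta^{L+1}=\sO(\delta^{L+1})$.

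The only subtle point I expect to encounter is a clean justification that $\tilde f^{\ast}\in\sC^{L+1}$ on a fixed neighborhood of $\vc 0$ together with the uniform boundedness of the constant in $y$: the variable change $x_{d}=\sqrt{1-\|x\|_{2}^{2}}$ is smooth on $B_{1}(\vc 0)$, and rotation only conjugates $f$, so the derivatives of $\tilde f^{\ast}$ up to order $L+1$ on $\overline{B_{1/2}(\vc 0)}$ are controlled by the $\sC^{L+1}$-norm of $f$ independently of the chosen rotation $\mt R$. This makes the constant uniform in $y$, completing the argument.
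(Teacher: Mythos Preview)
Your proposal is correct and follows essentially the same approach as the paper: reduce to the north pole via rotational invariance, pick $g\in\sG$ with matching Taylor jet using hypothesis~\ref{item:TL(G) spans all polynomials}, and apply the multivariate Taylor remainder to $f^{\ast}-g^{\ast}$ on $B_{\sin\delta}(\vc 0)\subset B_{\delta}(\vc 0)$. The only cosmetic differences are that the paper treats the north-pole case first and invokes rotation at the end, and that it does not discuss uniformity of the constant in $y$ (which the statement, read pointwise, does not require).
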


\begin{proof} First we prove the statement for the case $y = e_d$. To this end
  let $f \in \sC^{L+1}(\S^{d-1})$ be given. By assumption \ref{item:TL(G) spans
    all polynomials}, there exists a function $g \in \sG$, such that
  $T_L(g) = T_L(f)$. We show, that $g$ locally approximates $f$ with order
  $\sO \left( \delta^{L+1} \right)$ as $\delta$ goes to zero.

For $\delta \to 0$ we may assume $\delta < \frac{\pi}{2}$, such that the inverse
projection $\inv{\pi}(y)$ is well defined for all $y \in C_\delta(e_d)$. Let us
choose $y \in C_{\delta}(e_d)$ and set $x \coloneqq \pi^{-1}(y)$. From
$y \in C_{\delta}(e_d)$ we obtain $y_d > \cos\delta$ and the definition of the
projection $\pi$ yields:
\[ \| x\|_2^2 = y_1^2+\dots+y_{d-1}^2 = 1-y_d^2 < 1-\cos^2\delta = \sin^2\delta . \]
Thus $x$ is contained in the ball $B_{\sin\delta}(\vc{0})$ with radius
$\sin\delta$ around the center $\vc{0}$. Using the Taylor series of order
$L \in \N_0$ and the corresponding remainder term, we can write the pullback function
$f^{\ast}(x) = f \circ \pi (x)$ as
\[ f^{\ast}(x) = \left( T_L (f) \right)(x) + \sum\limits_{\vc{\alpha} \in \N_0^{d-1} , \, \vert \vc{\alpha} \vert = L+1} \frac{ \left( f^{\ast} \right)^{(\vc{\alpha})}(\xi_{f,x})}{\vc{\alpha}!} x^{\vc{\alpha}} \]
with some $\xi_{f,x} \in B_{\sin\delta}(\vc{0})$, that depends on the function
$f$ and the point $x$. The same can be applied to $g$. Because of $T_{L}(f) = T_{L}(g)$ this yields for
all $y \in C_\delta(e_d)$:
\begin{equation*}
  \left\vert f(y) - g(y) \right\vert = \left\vert f^{\ast}(x) - g^{\ast}(x) \right\vert =
  \left\vert \sum\limits_{\vc{\alpha} \in \N_0^{d-1} , \, \vert \vc{\alpha} \vert = L+1} \frac{(f^{\ast})^{(\vc{\alpha})}(\xi_{f,x}) - (g^{\ast})^{(\vc{\alpha})}(\xi_{g,x})}{\vc{\alpha}!} x^{\vc{\alpha}} \right\vert .
\end{equation*}
Since $x \in B_{\sin\delta}(\vc{0})$, each component of $x$ is bounded by
$\sin\delta$ and hence
$\vert \vc{x}^{\vc{\alpha}} \vert \leq \sin^{L+1} (\delta)$.
Further
\[ K := \sup\limits_{x \in B_{\sin\delta}(\vc{0})} \sum\limits_{\vc{\alpha} \in \N_0^{d-1} , \, \vert \vc{\alpha} \vert = L+1} \left\vert \frac{{\left(f^{\ast}\right)}^{(\vc{\alpha})}(\xi_{f,x}) - {\left(g^{\ast}\right)}^{(\vc{\alpha})}(\xi_{g,x})}{\vc{\alpha}!} \right\vert < \infty \]
because of $f, \, g \in \sC^{L+1}(\S^{d-1})$. This leads for $\delta \to 0$ to
the error estimate
\[ \sup\limits_{y \in C_{\delta}(e_d)} \vert f(y) - g(y) \vert \leq K \, \sin^{L+1} (\delta)  \leq K \, \delta^{L+1}  \]
and since $K$ is
decreasing as $\delta$ goes to zero this proves the statement
for the north pole $e_{d}$.
For general $y \in \S^{d-1}$ the statement follows from the rotational
invariance of $\sG$.
\end{proof}

A well known ansatz space $\sG$ that satisfies the conditions of
\Cref{lem:Phi(G) = all polynomials up to degree L} are the spherical
polynomials
\begin{equation*}
 \sH_L(\S^{d-1}) := \bigoplus_{\ell=0}^L \Harm_{\ell}(\S^{d-1}) 
\end{equation*}
up to a certain degree $L \in \N$ where
\begin{align*}
  \Harm_{\ell}(\S^{d-1})
  = \left\{ h\big\vert_{\S^{d-1}} \colon \S^{d-1} \to \C \, \middle\vert \, h \in \Hom_{\ell}(\R^d) , \, \Delta_d(h) \equiv 0 \right\}
\end{align*}
denotes the harmonic space of degree $\ell \in \N$ and $\Delta_d$
the Laplace operator in $d$ dimensions. The elements of $\Harm_{\ell}(\S^{d-1})$
are called spherical harmonics of degree $\ell$.

Let us collect some properties of the harmonic spaces. The dimension of one
harmonic space is given by
\begin{align}\label{eq:dim of harm}
  \dim \left( \Harm_{\ell}(\S^{d-1}) \right) =
  \dim \left( \Hom_{\ell}(\R^{d-1}) \right) +
  \dim \left( \Hom_{\ell-1}(\R^{d-1}) \right) , \, \ell \geq 1
\end{align}
and $\dim \left( \Hom_{0}(\R^{d-1}) \right) = 1$ respectively, see \cite[Equation
7]{Muller.1966}. By Theorem 5.7 from \cite{Axler.2001}, every
$p \in \Hom_{L}(\R^d)$ can be uniquely written as
\begin{equation*} p = h_{L} + \|x\|_2^2 \, h_{L-2} + \dots + \|x\|_2^{L-\sigma} h_{\sigma} ,
\end{equation*} where $\sigma \coloneqq L \pmod*{2}$ and
$h_{\ell} \in \Hom_{\ell}(\R^d)$ with
$\Delta_{d} \, h_{\ell} = 0 , \, \ell = L , L-2 , \dots , \sigma$. Restricting
the above representation of $p$ to the sphere proves the following lemma.

\begin{lemma}\label{lem:harmonic space contains all spherical polynomials} For
every monomial $\vc{x}^{\vc{\alpha}} \in \Hom_L(\R^d)$ of degree
$L = \vert \vc{\alpha} \vert = \alpha_1 + \dots + \alpha_d$
\[ \vc{x}^{\vc{\alpha}}(x) = \prod\limits_{i=1}^d x_i^{\alpha_i} , \quad x \in \R^d \]
there exist spherical harmonics $h_{\ell} \in \Harm_{\ell}(\S^{d-1})$ of degree
$\ell = L,L-2,\dots, \sigma \coloneqq L \pmod*{2}$, such that the restriction of
$\vc{x}^{\vc{\alpha}}$ to the sphere equals the sum of the spherical harmonics
$h_{\ell}$:
\[ \vc{x}^{\vc{\alpha}}(y) = h_L(y) + h_{L-2}(y) + \dots + h_{\sigma}(y) , \quad y \in \S^{d-1} . \]
\end{lemma}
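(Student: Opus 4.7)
The plan is to specialize the decomposition from \cite[Theorem 5.7]{Axler.2001}, which is quoted in the paragraph preceding the lemma, to the case of a single monomial and then restrict to the sphere. Since a monomial $\vc{x}^{\vc{\alpha}}$ with $\vert \vc{\alpha} \vert = L$ is an element of $\Hom_L(\R^d)$, the cited theorem applies directly and produces a unique decomposition
\[ \vc{x}^{\vc{\alpha}} = \tilde h_{L} + \|x\|_{2}^{2} \, \tilde h_{L-2} + \dots + \|x\|_{2}^{L-\sigma} \, \tilde h_{\sigma}, \]
where $\tilde h_{\ell} \in \Hom_{\ell}(\R^{d})$ satisfies $\Delta_{d} \tilde h_{\ell} = 0$ and $\sigma = L \pmod*{2}$.

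Next I would restrict this identity pointwise to $y \in \S^{d-1}$, where $\|y\|_{2}^{2} = 1$, so that all factors of the form $\|x\|_{2}^{2k}$ collapse to $1$. Setting $h_{\ell} \coloneqq \tilde h_{\ell}\big\vert_{\S^{d-1}}$, each $h_{\ell}$ is by construction the restriction to the sphere of a harmonic homogeneous polynomial of degree $\ell$, hence $h_{\ell} \in \Harm_{\ell}(\S^{d-1})$ by the definition of the harmonic spaces given just before the lemma. This yields
\[ \vc{x}^{\vc{\alpha}}(y) = h_{L}(y) + h_{L-2}(y) + \dots + h_{\sigma}(y), \quad y \in \S^{d-1}, \]
which is exactly the claimed identity.

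There is essentially no obstacle here: the lemma is a straightforward corollary of the Axler decomposition, and the only thing to do is verify that the formula specializes correctly to a monomial and that the restriction preserves the harmonic degree. I would keep the write-up short, making explicit only the step of substituting $\|y\|_{2}^{2}=1$ and the observation that restriction of a harmonic homogeneous polynomial of degree $\ell$ lies in $\Harm_{\ell}(\S^{d-1})$.
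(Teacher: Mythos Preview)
Your proposal is correct and follows exactly the approach the paper takes: the paper states the Axler decomposition for general $p \in \Hom_L(\R^d)$ in the paragraph before the lemma and then simply remarks that restricting this representation to the sphere proves the lemma. Your write-up just spells out the two obvious steps (substituting $\|y\|_2^2 = 1$ and identifying $\tilde h_\ell|_{\S^{d-1}} \in \Harm_\ell(\S^{d-1})$) that the paper leaves implicit.
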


Using this lemma we can easily verify that
$T_L \left( \sH_{L}(\S^{d-1}) \right) = \sP_L(\R^{d-1})$ and of course
$\sH_L(\S^{d-1})$ inherits the rotational invariance from the harmonic spaces,
hence $E_{\delta}(\sG,f,y) = \sO \left( \delta^{L+1} \right)$ for all
$f \in \sC(\S^{d-1})$ and for all $y \in \S^{d-1}$ as $\delta \to 0$ because of
\Cref{lem:Phi(G) = all polynomials up to degree L}.

However, we also want the dimension of the ansatz space to be as small as
possible. With respect to the condition
$T_L \left( \sG \right) = \sP_L(\R^{d-1})$ of \Cref{lem:Phi(G) = all polynomials
  up to degree L}, the dimension of the ansatz space is bounded from below by
\edited{
\begin{equation}
  \label{eq:lower bound on dim of G}
  \dim \left( \sG \right) \geq \dim \left( \sP_L(\R^{d-1}) \right) = \binom{d-1+L}L .
\end{equation}
}
In comparison, from \eqref{eq:dim of harm} and \eqref{eq:dim of sum of poly spaces}
we obtain
\edited{
\begin{align*}
  \dim \left( \sH_L(\S^{d-1}) \right) &=
  \dim \left( \sP_L(\R^{d-1}) \right) + \dim \left( \sP_{L-1}(\R^{d-1}) \right) \\
  &= \binom{d-1+L}L + \binom{d-1+L-1}{L-1}
\end{align*}
}
for $L \geq 1$ and $\dim \left( \sH_{0}(\S^{d-1}) \right) = 1$ respectively, which is almost
twice as big as the lower bound.

We will now drastically reduce the dimension of the ansatz space by leaving out the harmonic
spaces of every second degree and showing, that both conditions of \Cref{lem:Phi(G) = all
polynomials up to degree L} remain satisfied.

\begin{definition} For $L \in \N_0$ we denote the direct sum of the harmonic
spaces of even, respectively odd, degree up to $L$ by
\begin{align}\label{eq:even/odd spherical
harmonics} \sH_{L,2}(\S^{d-1}) \coloneqq \bigoplus\limits_{\ell \leq L , \, \ell \equiv L \pmod*{2}} \Harm_{\ell}(\S^{d-1}) .
\end{align}
\end{definition}

We can use \eqref{eq:dim of harm} to easily verify that the dimension of this new ansatz
space even \edited{attains the lower bound given in \eqref{eq:lower bound on dim of G}}
\begin{align}\label{eq:minimal dimension}
  \dim \left( \sH_{L,2}(\S^{d-1}) \right) = \dim \left( \sP_L(\R^{d-1}) \right)
  = \binom{d-1+L}{L} .
\end{align}
Since $\sH_{L,2}(\S^{d-1})$ is the direct sum of harmonic spaces, it
is still rotational invariant and thus we only have to show, that the condition
$T_L \left( \sH_{L,2}(\S^{d-1}) \right) = \sP_L(\R^{d-1})$ remains satisfied. To
this end we introduce a basis of both $\sH_{L,2}(\S^{d-1})$ and $\sP_L(\R^{d-1})$. For the
latter one, we utilize the monomial basis
\begin{align}\label{eq:monomial
basis} \left\{ \vc{x}^{\vc{\alpha}} \colon \R^{d-1} \to \R \, \middle\vert \, \vc{\alpha} \in \N_0^{d-1} , \, \vert \vc{\alpha} \vert \leq L \right\} .
\end{align} The harmonic spaces are usually equipped with an $L^2$-orthogonal
basis. However, for our purpose the following monomial basis of
$\sH_{L,2}(\S^{d-1})$ is more suitable.

\begin{lemma}\label{lem:basis of G_L} Let $L \in \N_0$ be given.
  The set of $d$-variate monomials
  \[ \sB_{L,2} \coloneqq \left\{ \vc{y}^{\vc{\alpha}} \colon \S^{d-1} \to \R \,
      \middle\vert \, \vc{\alpha} \in \N_0^d , \, \vert \vc{\alpha} \vert \leq L , \,
      \vert \vc{\alpha} \vert \equiv L \pmod*{2} , \, \alpha_d \leq 1 \right\} \]
  constitutes a basis of $\sH_{L,2}(\S^{d-1})$.
\end{lemma}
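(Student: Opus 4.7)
The plan is to establish the three standard ingredients for a basis: every element of $\sB_{L,2}$ lies in $\sH_{L,2}(\S^{d-1})$, the cardinality of $\sB_{L,2}$ matches the dimension of $\sH_{L,2}(\S^{d-1})$, and $\sB_{L,2}$ is linearly independent on the sphere. Since both spaces are rotation invariant and finitely generated, once containment and a dimension match are in place, linear independence will close the argument.

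For the containment $\sB_{L,2} \subset \sH_{L,2}(\S^{d-1})$ I would directly invoke \Cref{lem:harmonic space contains all spherical polynomials}. Any monomial $\vc{y}^{\vc{\alpha}}$ restricted to $\S^{d-1}$ decomposes into spherical harmonics of degrees $|\vc{\alpha}|, |\vc{\alpha}|-2,\dots,\sigma$. The constraints $|\vc{\alpha}| \leq L$ and $|\vc{\alpha}| \equiv L \pmod*{2}$ force every such degree to lie in $\{0,1,\dots,L\}$ with the correct parity, so the restriction belongs to $\sH_{L,2}(\S^{d-1})$. For the cardinality I would split $\sB_{L,2}$ according to $\alpha_d \in \{0,1\}$: the $\alpha_d=0$ part is indexed by $\vc{\alpha}' \in \N_0^{d-1}$ with $|\vc{\alpha}'| \leq L$ and $|\vc{\alpha}'|\equiv L \pmod*{2}$, while the $\alpha_d=1$ part is indexed by $\vc{\alpha}' \in \N_0^{d-1}$ with $|\vc{\alpha}'| \leq L-1$ and $|\vc{\alpha}'| \equiv L-1 \pmod*{2}$. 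Their union therefore enumerates exactly the multi-indices $\vc{\alpha}' \in \N_0^{d-1}$ with $|\vc{\alpha}'| \leq L$, of which there are $\sum_{k=0}^{L} \binom{d-2+k}{d-2} = \binom{d-1+L}{L}$ by the hockey-stick identity. By \eqref{eq:minimal dimension} this coincides with $\dim \sH_{L,2}(\S^{d-1})$.

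For linear independence suppose that $\sum_{\vc{\alpha}} c_{\vc{\alpha}}\, \vc{y}^{\vc{\alpha}} \equiv 0$ on $\S^{d-1}$. Splitting according to the value of $\alpha_d \in \{0,1\}$ rewrites this as
\begin{equation*}
  A(y_1,\dots,y_{d-1}) + y_d \cdot B(y_1,\dots,y_{d-1}) = 0 \quad \text{on } \S^{d-1},
\end{equation*}
where $A$ and $B$ are polynomials in the first $d-1$ variables. Evaluating this identity at the two antipodal sphere points $(y_1,\dots,y_{d-1}, \pm\sqrt{1-\|y'\|_2^2})$ for every $y' \in B_1(\vc{0})$ and adding/subtracting the two equations yields $A(y') = 0$ and $B(y') = 0$ on $B_1(\vc{0})$. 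Since ordinary monomials in $d-1$ variables are linearly independent as polynomials, this forces every $c_{\vc{\alpha}}$ to vanish.

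The main obstacle is really the linear independence step: even though the reflection trick $y_d \mapsto -y_d$ makes it short, the parity bookkeeping that makes $\alpha_d \leq 1$ the right constraint (rather than simply $\alpha_d = 0$) must be handled carefully, because it is precisely the freedom to attach a single $y_d$ that lets the monomial basis capture \emph{both} harmonics of degree $L$ and of degree $L-1$ simultaneously, so that the overall dimension count hits the optimal bound $\binom{d-1+L}{L}$.
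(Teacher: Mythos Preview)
Your argument is correct, but the third step takes a different route from the paper. The containment via \Cref{lem:harmonic space contains all spherical polynomials} and the cardinality count via the $\alpha_d\in\{0,1\}$ split are identical to what the paper does. The paper, however, completes the proof by establishing the \emph{spanning} direction $\sH_{L,2}(\S^{d-1})\subseteq\spn\sB_{L,2}$: it writes an arbitrary $h\in\sH_{L,2}(\S^{d-1})$ as a polynomial in $y_1,\dots,y_d$ and repeatedly substitutes $y_d^2 = 1 - y_1^2 - \dots - y_{d-1}^2$ until every remaining monomial has $\alpha_d\leq 1$, checking that the parity condition on the total degree is preserved. You instead establish \emph{linear independence} by the reflection $y_d\mapsto -y_d$, which is shorter and arguably more elegant; the paper's route is more constructive in that it supplies an explicit rewriting procedure for expanding an arbitrary element in the basis. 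Either conclusion, combined with the dimension match and the containment, yields the basis property, so the two arguments are logically dual to one another.
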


\begin{proof} First we are going to verify, that the number of elements in
$\sB_{L,2}$ equals the dimension of the space $\sH_{L,2}(\S^{d-1})$. After that
we show, that $\sH_{L,2}(\S^{d-1})$ is the span of $\sB_{L,2}$. Together this
implies, that $\sB_{L,2}$ is  a basis of $\sH_{L,2}(\S^{d-1})$. For the first
step, we count the elements in $\sB_{L,2}$, which gives
\begin{align}
  \label{eq:size of basis}
  \begin{split}
    \vert \sB_{L,2} \vert
    &= \left\vert \left\{ \vc{\alpha} \in \N_0^d \, \middle\vert \,
      \vert \vc{\alpha} \vert \leq L , \, \vert \vc{\alpha} \vert \equiv L \pmod*{2} , \,
      \alpha_d = 0 \right\} \right\vert \\
    &\quad \hspace{3cm} + \left\vert \left\{ \vc{\alpha} \in \N_0^d \, \middle\vert \,
      \vert \vc{\alpha} \vert \leq L , \, \vert \vc{\alpha} \vert \equiv L \pmod*{2} , \,
      \alpha_d = 1 \right\} \right\vert \\
    &= \left\vert \left\{ \widetilde{\vc{\alpha}} \in \N_0^{d-1} \, \middle\vert \,
      \vert \widetilde{\vc{\alpha}} \vert \leq L ,
      \vert \widetilde{\vc{\alpha}} \vert \equiv L \pmod*{2} \right\} \right\vert \\
    &\quad \hspace{3cm} + \left\vert \left\{ \widetilde{\vc{\alpha}} \in \N_0^{d-1} \,
      \middle\vert \, \vert \widetilde{\vc{\alpha}} \vert \leq L-1 , \,
      \vert \widetilde{\vc{\alpha}} \vert \equiv L-1 \pmod*{2} \right\} \right\vert \\
    &= \left\vert \left\{ \widetilde{\vc{\alpha}} \in \N_0^{d-1} \, \middle\vert \,
      \vert \widetilde{\vc{\alpha}} \vert \leq L \right\} \right\vert =
      \dim \left( \sP_L(\R^{d-1}) \right) = \dim \left( \sH_{L,2}(\S^{d-1}) \right) ,
  \end{split}
\end{align}
where we used \eqref{eq:minimal dimension} in the last step.

For the second part of the proof we first note that
$\spn \left\{ \sB_{L,2} \right\} \subseteq \sH_{L,2}(\S^{d-1})$ by
\Cref{lem:harmonic space contains all spherical polynomials}. For the other
inclusion, let $h \in \sH_{L,2}(\S^{d-1})$ be given and denote
$\sigma \coloneqq L \mod 2$. By the definition of $\sH_{L,2}(\S^{d-1})$ we may
write $h = h_{L} + h_{L-2} + \dots + h_{\sigma}$ with
$h_{\ell} \in \Harm_{\ell}(\S^{d-1}) , \, \ell = L,L-2,\dots,\sigma$. Further,
by the definition of the harmonic spaces, there exist homogeneous polynomials
$H_{\ell} \in \Hom_{\ell}(\R^{d})$, such that
$h_{\ell} = H_{\ell} \big\vert_{\S^{d-1}}$ for $\ell = L,L-2,\dots,\sigma$ and we
may write the sum as
\begin{equation*}
  H \coloneqq H_{L} + H_{L-2} + \dots +  H_{\sigma} = \sum\limits_{\vc{\alpha} \in \N_{0}^{d} , \, \vert \vc{\alpha} \vert \leq L , \, \vert \vc{\alpha} \vert \equiv L \pmod*{2}} c_{\alpha} \vc{y}^{\vc{\alpha}}
\end{equation*}
with coefficients $c_{\alpha} \in \C$. If the coefficients are $0$ for all
monomials $\vc{y}^{\vc{\alpha}}$ with $\alpha_{d} \geq 2$ then there exists an
element $g \in \spn \left\{ \sB_{L,2} \right\}$ with
$g = H \big\vert_{\S^{d-1}} =  h$ and we are done. If
not, we repeat substitution of the identity $y_{d}^{2} = 1 - y_1^2 + \dots + y_d^2$
\begin{align*} \vc{y}^{\vc{\alpha}}(y) &= y_1^{\alpha_1} \cdot \dots \cdot y_d^{\alpha_d} = y_1^{\alpha_1} \cdot \dots \cdot y_{d-1}^{\alpha_{d-1}} y_d^{\alpha_d-2} \left( 1 - y_1^2 - \dots - y_{d-1}^2 \right) \\ &= y_1^{\alpha_1} \cdot \dots \cdot y_{d-1}^{\alpha_{d-1}} y_d^{\alpha_d-2} - \sum\limits_{i=1}^{d-1} y_i^2 \cdot y_1^{\alpha_{1}} \cdot \dots \cdot y_{d-1}^{\alpha_{d-1}} y_d^{\alpha_d-2} , \quad y \in \S^{d-1}
\end{align*}
until the highest remaining exponent in the $d$-th variable is $1$ or $0$. Note
that the value of $H$ remains unchanged on $\S^{d-1}$. Since the substitution
always yields monomials of the same degree or $2$ less, the condition
$\vert \vc{\alpha} \vert \leq L , \, \vert \vc{\alpha} \vert \equiv L \pmod*{2}$
remains satisfied in all monomials $\vc{y^{\vc{\alpha}}}$ too. Hence there
exists an element $g \in \spn \left\{ \sB_{L,2} \right\}$ with
$g = H \big\vert_{\S^{d-1}} = h$
and thus $\sH_{L,2} \subseteq \spn \left\{ \sB_{L,2} \right\}$.
\end{proof}

\begin{remark} \label{rem:monmoial basis of spherical harmonics} It is
straightforward to see, that
$\sH_L(\S^{d-1}) = \sH_{L,2}(\S^{d-1}) \oplus \sH_{L-1,2}(\S^{d-1})$ and hence
the union $\sB_{L,2} \cup \sB_{L-1,2}$ constitutes a monomial basis of
$\sH_L(\S^{d-1})$. This basis has also been mentioned in \cite{Golitschek.2001}.
\end{remark}

The monomial basis $\sB_{L,2}$ enables us to show, that the second
condition of \Cref{lem:Phi(G) = all polynomials up
to degree L} is satisfied as well.

\begin{lemma}\label{lem:T_L circ P is bijective} Let $L \in \N_0$ be given. The
linear mapping
\[ T_L \colon \sH_{L,2}(\S^{d-1}) \to \sP_L(\R^{d-1}) , \quad g \mapsto \sum\limits_{\vc{\alpha} \in \N_0^{d-1} , \, \vert \vc{\alpha} \vert \leq L} \frac{\left( g^{\ast} \right)^{(\vc{\alpha})} (\vc{0})}{\vc{\alpha}!} \vc{x}^{\vc{\alpha}} , \]
that maps every function $g \in \sH_{L,2}(\S^{d-1})$ to the truncated Taylor
series of order $L$ of the pullback function $g^{\ast} = g \circ \pi$ satisfies
condition \ref{item:TL(G) spans all polynomials} of \Cref{lem:Phi(G) = all
polynomials up to degree L}, i.e.\
$T_L \left( \sH_{L,2}(\S^{d-1}) \right) = \sP_L(\R^{d-1})$.
\end{lemma}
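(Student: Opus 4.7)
The plan is to prove surjectivity of $T_L$ by analyzing its action on the monomial basis $\sB_{L,2}$ provided by \Cref{lem:basis of G_L} and showing that, with respect to a suitable ordering, the map is represented by a triangular matrix with unit diagonal. Since by \eqref{eq:minimal dimension} and \eqref{eq:dim of sum of poly spaces} the spaces $\sH_{L,2}(\S^{d-1})$ and $\sP_L(\R^{d-1})$ have the same dimension, it suffices to verify injectivity or, equivalently, that the image of $\sB_{L,2}$ is linearly independent in $\sP_L(\R^{d-1})$.

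First I would compute $T_L$ explicitly on each basis monomial $\vc{y}^{\vc{\alpha}} \in \sB_{L,2}$ by splitting into the two cases $\alpha_d = 0$ and $\alpha_d = 1$. Writing $\widetilde{\vc{\alpha}} = (\alpha_1, \dots, \alpha_{d-1})$, the pullback in the first case is simply the monomial $\vc{x}^{\widetilde{\vc{\alpha}}}$ on $\R^{d-1}$, so $T_L(\vc{y}^{\vc{\alpha}}) = \vc{x}^{\widetilde{\vc{\alpha}}}$. In the second case, $(\vc{y}^{\vc{\alpha}})^{\ast}(x) = \vc{x}^{\widetilde{\vc{\alpha}}}\sqrt{1 - \|x\|_2^2}$, and using the binomial expansion $\sqrt{1 - \|x\|_2^2} = 1 - \tfrac{1}{2}\|x\|_2^2 - \tfrac{1}{8}\|x\|_2^4 - \cdots$, the truncated Taylor series has the form $T_L(\vc{y}^{\vc{\alpha}}) = \vc{x}^{\widetilde{\vc{\alpha}}} + (\text{terms of degree} \ge |\widetilde{\vc{\alpha}}| + 2)$ with the lowest-order term being exactly $\vc{x}^{\widetilde{\vc{\alpha}}}$.

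Next I would set up the bijection between $\sB_{L,2}$ and the monomial basis $\{\vc{x}^{\widetilde{\vc{\alpha}}} : |\widetilde{\vc{\alpha}}| \leq L\}$ of $\sP_L(\R^{d-1})$: given $\widetilde{\vc{\alpha}} \in \N_0^{d-1}$ with $|\widetilde{\vc{\alpha}}| \leq L$, associate the unique $\vc{\alpha} \in \sB_{L,2}$ by choosing $\alpha_d = 0$ when $|\widetilde{\vc{\alpha}}| \equiv L \pmod{2}$ and $\alpha_d = 1$ otherwise. The constraints $|\vc{\alpha}| \leq L$, $|\vc{\alpha}| \equiv L \pmod{2}$, $\alpha_d \leq 1$ are satisfied (using $|\widetilde{\vc{\alpha}}| \leq L-1$ in the odd-parity branch), and the counting from \eqref{eq:size of basis} confirms the bijection.

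Finally I would order both $\sB_{L,2}$ and $\{\vc{x}^{\widetilde{\vc{\alpha}}}\}$ by ascending total degree (arbitrary within a degree class) and assemble the matrix of $T_L$ in these bases. By the computations above, the entry at row $\vc{x}^{\vc{\beta}}$ and column $\vc{y}^{\vc{\alpha}}$ vanishes whenever $|\vc{\beta}| < |\widetilde{\vc{\alpha}}|$, and equals $\delta_{\vc{\beta}, \widetilde{\vc{\alpha}}}$ when $|\vc{\beta}| = |\widetilde{\vc{\alpha}}|$ (since neither the $\alpha_d = 0$ nor the $\alpha_d = 1$ image contributes any other monomial of the same degree). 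This makes the matrix lower triangular with unit diagonal, hence invertible, so $T_L$ is an isomorphism and in particular $T_L(\sH_{L,2}(\S^{d-1})) = \sP_L(\R^{d-1})$. The only delicate step is the indexing bookkeeping in the bijection above, ensuring the parity and degree constraints line up; everything else reduces to the elementary observation that $\sqrt{1 - \|x\|_2^2}$ starts at $1$.
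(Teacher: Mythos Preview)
Your proposal is correct and follows essentially the same strategy as the paper: both proofs compute $T_L$ on the monomial basis $\sB_{L,2}$, split into the cases $\alpha_d=0$ and $\alpha_d=1$, and show that with the bijection $\vc{\alpha}\leftrightarrow\widetilde{\vc{\alpha}}$ and an ordering by increasing $|\widetilde{\vc{\alpha}}|$ the resulting matrix is lower triangular with unit diagonal. The only cosmetic difference is that the paper extracts the matrix entries via the Leibniz rule for the derivatives of $\vc{x}^{\widetilde{\vc{\alpha}}}\sqrt{1-\|x\|_2^2}$, whereas you read them off directly from the power-series expansion of $\sqrt{1-\|x\|_2^2}$; both yield the same conclusion that the $\alpha_d=1$ columns contribute no monomial of degree below $|\widetilde{\vc{\alpha}}|$ and exactly $\vc{x}^{\widetilde{\vc{\alpha}}}$ at degree $|\widetilde{\vc{\alpha}}|$.
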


\begin{proof} We equip $\sH_{L,2}(\S^{d-1})$ with the basis $\sB_{L,2}$ from
\Cref{lem:basis of G_L} and $\sP_L(\R^{d-1})$ with the monomial basis
from \eqref{eq:monomial basis}. Let
$\mt{M}_L = m_L^{\vc{\alpha},\vc{\beta}}$ denote the
corresponding matrix representation of $T_L$, where
$\vc{\alpha} \in \N_0^d , \, \vert \vc{\alpha} \vert \leq L , \, \vert \vc{\alpha} \vert \equiv L \pmod*{2} , \, \alpha_d \leq 1$
and $\vc{\beta} \in \N_0^{d-1} , \, \vert \vc{\beta} \vert \leq L$. From
\begin{align*} T_L (\vc{y}^{\vc{\alpha}}) = \sum\limits_{\vc{\beta} \in \N_0^{d-1} , \, \vert \vc{\beta} \vert \leq L} \frac{\left( \vc{y}^{\vc{\alpha}} \circ \pi \right)^{(\vc{\beta})}(\vc{0})}{\vc{\beta}!} \vc{x}^{\vc{\beta}}
\end{align*} we can read off the matrix entries of $\mt{M}_{L}$:
\begin{align}\label{eq:matrix entries of
M} m_L^{\vc{\alpha},\vc{\beta}} = \frac{\left( \vc{y}^{\vc{\alpha}} \circ \pi \right)^{(\vc{\beta})}(\vc{0})}{\vc{\beta}!} .
\end{align}
Let $\vc{y}^{\vc{\alpha}} \in \sB_{L,2}$ be a basis element and denote
$\widetilde{\vc{\alpha}} := (\alpha_1,\dots,\alpha_{d-1}) \in \N_0^{d-1}$ and
$\vc{\alpha} = (\widetilde{\vc{\alpha}},\alpha_d)$. Recall that, by the
definition of $\sB_{L,2}$, the multi-index $\vc{\alpha}$  satisfies
$\alpha_d \in \{0,1\}$. We are going to compute
$m_L^{\vc \alpha,\vc \beta}$ for $\alpha_d = 0$ and $\alpha_d = 1$ separately.

First we consider $\alpha_d = 0$. For $x \in B_1(\vc{0})$ the pullback of the
basis element $\vc{y}^{(\widetilde{\vc{\alpha}},0)} \in \sB_{L,2}$ can be
written as
\[ \vc{y}^{(\widetilde{\vc{\alpha}},0)} \circ \pi (x) = \vc{y}^{(\widetilde{\vc{\alpha}},0)} (x_1,\dots,x_{d-1},\sqrt{1-\|x\|_2^2}) = \left( x_{1}, \dots , x_{d-1} \right)^{\widetilde{\vc{\alpha}}} = x^{\widetilde{\vc{\alpha}}} , \]
hence
$\vc{y}^{(\widetilde{\vc{\alpha}},0)} \circ \pi = \vc{x}^{\vc{\widetilde{\alpha}}}$
on $B_{1}( \vc{0} )$.
In order to evaluate the matrix entry \eqref{eq:matrix entries of M}, we have to
take the derivative with respect to some
$\vc{\beta} \in \N_0^{d-1} , \, \vert \vc{\beta} \vert \leq L$ and evaluate it
in the point $\vc{0} \in \R^{d-1}$. Since the evaluation of any non-constant
monomial in the point $\vc{0} \in \R^{d-1}$ always gives $0$, we get
\begin{align}\label{eq:matrix entries for alpha_d = 0}
  m_L^{(\widetilde{\vc{\alpha}},0),\vc{\beta}} = \frac{\left( \vc{y}^{(\widetilde{\vc{\alpha}},0)} \circ \pi \right)^{(\vc{\beta})}(\vc{0})}{\vc{\beta}!} = \frac{\left( \vc{x}^{\widetilde{\vc{\alpha}}} \right)^{(\vc{\beta})} \left( \vc{0} \right)}{\vc{\beta}!} = \begin{cases} 1 &, \widetilde{\vc{\alpha}} = \vc{\beta} \\ 0 &, \widetilde{\vc{\alpha}} \neq \vc{\beta} \end{cases} .
\end{align}

Now we consider the case $\alpha_d = 1$. For $x \in B_1(\vc{0})$ the pullback of
the basis element $\vc{y}^{(\widetilde{\vc{\alpha}},1)} \in \sB_{L,2}$ can be
written as
\[ \vc{y}^{(\widetilde{\vc{\alpha}},1)} \circ \pi (x) = \vc{y}^{(\widetilde{\vc{\alpha}},1)} \left( x_1,\dots,x_{d-1},\sqrt{1-\|x\|_2^2} \right) = x^{\widetilde{\vc{\alpha}}} \, \sqrt{1-\|x\|_2^2} . \]
Let us denote $W(x) := \sqrt{1-\|x\|_2^2}$. The Leibniz rule allows us to
compute the derivative with respect to $\vc{\beta} \in \N_0^{d-1}$ as follows:
\begin{align}\label{eq:diff with Leibniz rule}
  \begin{split}
    \left( \vc{x}^{\widetilde{\vc{\alpha}}} \, W \right) ^{(\vc{\beta})} \left( \vc{0} \right)
    &=
      \left( \sum\limits_{\vc{\gamma} \in \N_0^{d-1} , \, \vc{\gamma} \leq \vc{\beta}}
      \binom{\vc{\beta}}{\vc{\gamma}}
      \left(  \vc{x}^{\widetilde{\vc{\alpha}}} \right)^{(\vc{\gamma})}
      W^{(\vc{\beta} - \vc{\gamma})} \right) \left( \vc{0} \right)
    \\ &=
         \begin{cases}
           \binom{\vc{\beta}}{\widetilde{\vc{\alpha}}} \, \widetilde{\vc{\alpha}}! \,
           W^{(\vc{\beta}-\widetilde{\vc{\alpha}})} \left( \vc{0} \right)
           &, \widetilde{\vc{\alpha}} \leq \vc{\beta}
           \\
           \hfil 0 &, \widetilde{\vc{\alpha}} \nleq \vc{\beta}
         \end{cases} ,
  \end{split}
\end{align}
where we used \eqref{eq:matrix entries for alpha_d = 0} in the second step.
We want to show that the matrix $\mt{M}_{L}$ is similar to a lower triangle
matrix with ones on the diagonal. Then
$\vert \det \left( \mt{M}_{L} \right) \vert = 1$ and thus $T_{L}$ is bijective.
To this end it will be sufficient to evaluate \eqref{eq:diff with Leibniz rule}
for $\vert \widetilde{\vc{\alpha}} \vert \geq \vert \vc{\beta} \vert$. But then
$\widetilde{\vc{\alpha}} \leq \vc{\beta}$ never happens, except if
$\widetilde{\vc{\alpha}} = \vc{\beta}$, thus we obtain
\begin{align}
  \label{eq:matrix entries for alpha_d = 1}
  m_L^{(\widetilde{\vc{\alpha}},1),\vc{\beta}} = \frac{\left( \vc{x}^{\widetilde{\vc{\alpha}}} \, W \right) ^{(\vc{\beta})}(\vc{0})}{\vc{\beta}!} = \begin{cases} 1 &, \widetilde{\vc{\alpha}} = \vc{\beta} \\ 0 &, \widetilde{\vc{\alpha}} \neq \vc{\beta} , \, \vert \widetilde{\vc{\alpha}} \vert \geq \vert \vc{\beta} \vert \end{cases} .
\end{align}

Let us identify the rows and columns with the multi-indice
$\vc{\beta} \in \N_0^{d-1}$ corresponding to the derivatives and the exponents
$\vc{\alpha} \in \N_0^d$ of the basis elements $\vc{y}^{\vc{\alpha}} \in \sB_L$,
respectively. Using the formulas \eqref{eq:matrix entries for alpha_d = 0} and
\eqref{eq:matrix entries for alpha_d = 1} we rearrange the rows and columns of
$\mt{M}_{L}$ in order to obtain a lower triangle matrix with ones on the diagonal.
Let $\vc{\alpha}_1,\dots,\vc{\alpha}_{N_L}$ be an arrangement of
the columns, such that $\vert \vc{\alpha}_j \vert$ increases or stays constant,
as the column index $j$ increases. Additionally we arrange the rows such that
$\vc{\beta}_j = \widetilde{\vc{\alpha}}_j$ for $j = 1,\dots,N_L$. It can be seen
from \eqref{eq:size of basis} that this is possible. Now $j \geq i$ implies
$\vert \widetilde{\vc{\alpha}}_j \vert \geq \vert \vc{\beta}_i \vert$ and by
applying the formulas \eqref{eq:matrix entries for alpha_d = 0} and
\eqref{eq:matrix entries for alpha_d = 1} we thus get
\[ m_L^{\vc{\alpha}_j,\vc{\beta}_i} = \begin{cases} 1 &, j = i \\ 0 &, j > i \end{cases} , \]
hence the described ordering leads to a lower triangle matrix with ones
on the main diagonal.
\end{proof}

Combining the previous results we obtain the concluding theorem of this section.

\begin{theorem}
  \label{thm:main result} Let $L \in \N_0$ be given and let
  \begin{equation*} \sH_{L,2}(\S^{d-1}) = \bigoplus\limits_{\ell \leq L , \, \ell \equiv L \pmod*{2}} \Harm_{\ell}(\S^{d-1})
  \end{equation*} be the direct sum of the harmonic spaces of even, respectively
odd, degree up to $L$. For all $f \in \sC^{L+1}(\S^{d-1})$ and $y \in \S^{d-1}$
the error $E_{\delta}(\sH_{L,2}(\S^{d-1}),f,y)$ of the local best approximation
of $f$ on the spherical cap $C_{\delta}(y)$ from $\sH_{L,2}(\S^{d-1})$ decays
with order $\sO(\delta^{L+1})$ as $\delta$ goes to zero:
  \begin{equation*} E_{\delta}(\sH_{L,2}(\S^{d-1}),f,y) = \sO(\delta^{L+1}) .
  \end{equation*}
\end{theorem}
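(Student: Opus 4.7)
The plan is to simply apply \Cref{lem:Phi(G) = all polynomials up to degree L} with the ansatz space $\sG = \sH_{L,2}(\S^{d-1})$. This reduces everything to verifying the two hypotheses of that lemma, namely rotational invariance and the surjectivity condition $T_L(\sG) = \sP_L(\R^{d-1})$.

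For the first hypothesis, I would argue that each harmonic space $\Harm_{\ell}(\S^{d-1})$ is rotation-invariant, since for any $h \in \Harm_{\ell}(\S^{d-1})$ extended to a homogeneous harmonic polynomial $H \in \Hom_{\ell}(\R^d)$ and any $\mt{R} \in \SO{d}$, the composition $H \circ \mt{R}$ is again homogeneous of degree $\ell$ and harmonic (because the Laplacian commutes with orthogonal transformations). Since $\sH_{L,2}(\S^{d-1})$ is defined as a direct sum of such spaces, it inherits rotational invariance, and hypothesis \ref{item:rotation invariance} of \Cref{lem:Phi(G) = all polynomials up to degree L} is satisfied.

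For the second hypothesis, I would appeal directly to \Cref{lem:T_L circ P is bijective}, which has already established that the truncated Taylor map $T_L$ sends $\sH_{L,2}(\S^{d-1})$ onto $\sP_L(\R^{d-1})$. In fact, by the dimension count \eqref{eq:minimal dimension}, $T_L$ is a bijection between these two spaces of equal dimension, but surjectivity is all that is needed here. This is exactly hypothesis \ref{item:TL(G) spans all polynomials}.

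With both hypotheses in place, \Cref{lem:Phi(G) = all polynomials up to degree L} immediately yields $E_{\delta}(\sH_{L,2}(\S^{d-1}),f,y) = \sO(\delta^{L+1})$ for every $f \in \sC^{L+1}(\S^{d-1})$ and every $y \in \S^{d-1}$. There is no real obstacle remaining: the technical work has been carried out in \Cref{lem:basis of G_L} and \Cref{lem:T_L circ P is bijective}, and the present theorem is simply the package that combines them with the abstract local approximation result.
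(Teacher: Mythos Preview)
Your proposal is correct and matches the paper's own proof essentially line for line: the paper also verifies rotational invariance by noting that $\sH_{L,2}(\S^{d-1})$ is a direct sum of harmonic spaces, invokes \Cref{lem:T_L circ P is bijective} for the surjectivity condition, and then applies \Cref{lem:Phi(G) = all polynomials up to degree L}. The only difference is that you spell out why each $\Harm_{\ell}(\S^{d-1})$ is rotation-invariant, which the paper takes for granted.
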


\begin{proof} Since $\sH_{L,2}(\S^{d-1})$ is the direct sum of harmonic spaces,
it is rotational invariant. Further, \Cref{lem:T_L circ P is bijective} states
$T_L \left( \sH_{L,2}(\S^{d-1}) \right) = \sP_L(\R^{d-1})$. The assertion
follows directly from \Cref{lem:Phi(G) = all polynomials up to degree L}.
\end{proof}

 
\section{A Uniform Bound for the Lebesgue Constant}
\label{sec:lebesgue-constant}

Let a function $f \in \sC^{L+1}(\S^{d-1})$ and a center $y \in \S^{d-1}$ be given and let $\sM f(y)$ be the MLS approximation of $f$ in $y$ with respect to the ansatz space $\sG \subset \sC(\S^{d-1})$. 
If the weight function $w$ vanishes for $d(y_{i},y) \geq \delta$, then by
\Cref{lem:MLS error estimate} the error of the MLS approximation can be estimated by
\begin{align}\label{eq:error est of MLS} 
\vert f(y) - \sM f(y) \vert \leq E_{\delta}(\sG,f,y) \left( 1 + \sum\limits_{w(y_i,y)>0} \vert a_i^{\ast}(y) \vert \right) , 
\end{align} 
where the coefficients $a_i^{\ast}(y) \in \C$ are the unique solution of the minimization problem
\begin{align} 
&\min\limits_{a_i(y) \in \C} \sum\limits_{w(y_i,y)>0} \frac{1}{w(y_i,y)} \vert a_i(y) \vert^2 \label{eq:target function} \\ 
\text{subject to} \quad &\sum\limits_{w(y_i,y)>0} a_i(y) g(y_i) = g(y) , \quad g \in \sG . \label{eq:reconstruction constraints}
\end{align}

\begin{definition}
  The Lebesgue constant with respect to the ansatz space $\sG$, the weight
  function $w$, the nodes $Y = \left\{ y_1,\dots,y_N \right\} \subset \S^{d-1}$
  and the center $y \in \S^{d-1}$ is defined as
  \begin{align}\label{eq:def Lebesgue constant}
    \sL \left( \sG,w,Y,y \right) \coloneqq
    \sum\limits_{w(y_i,y) > 0} \vert a_i^{\ast}(y) \vert ,
  \end{align}
  where the dependencies are defined by the optimization problem
  \eqref{eq:target function}, \eqref{eq:reconstruction constraints}.
\end{definition}

Recall, that we are interested in the decay rate of the error 
\[ \| f - \sM f \|_{\infty} = \sup\limits_{y \in \S^{d-1}} \vert f(y) - \sM f(y) \vert \] 
as the number $N$ of nodes increases to $\infty$.
In order to keep the size of the local minimization problem \eqref{eq:target
  function}, \eqref{eq:reconstruction constraints} bounded, we simultaneously
have to make the weight function $w$ more localized, thus we may assume
$\delta \to 0$.
From \Cref{thm:main result} we know, that the error
$E_{\delta}\left( \sH_{L,2}(\S^{d-1}),f,y \right)$ of the local best
approximation vanishes with order $\sO \left( \delta^{L+1} \right)$ for
$\delta \to 0$. By the error estimate \eqref{eq:error est of MLS}, the MLS
approximation inherits this approximation order, if the corresponding Lebesgue
constant is uniformly bounded for all $y \in \S^{d-1}$. Thus, the goal of this
section is to state for all $y \in \S^{d-1}$ a uniform bound $C>0$ of the
Lebesgue constant with respect to the ansatz space  $\sH_{L,2}(\S^{d-1})$, i.e.\
\[ \sL \left( \sH_{L,2}(\S^{d-1}),w,Y,y \right) \leq C . \]
To this end we have to impose some assumptions on the weight function $w$ and on
the distribution of the nodes $Y = \left\{ y_1,\dots,y_N \right\}$.
As proposed in \cite{Wendland.2001} we consider radial weight functions of the
form
\begin{align}\label{eq:weight function} 
  w \colon \S^{d-1} \times \S^{d-1} \to \lbrack 0,\infty ) , \quad
  w(y, z) = \phi \left( \frac{d(y,z)}{\delta} \right) ,
\end{align} 
where $\phi \colon \lbrack 0,\pi \rbrack \to \lbrack 0,\infty)$ is a continuous function with
\begin{align}
\label{eq:weight fun prop 1}\phi(r) &> 0 , \quad 0 \leq r \leq \frac{1}{2} , \\
\label{eq:weight fun prop 2}\phi(r) &= 0 , \quad r \geq 1 .
\end{align}

The radiality is not necessary, but it simplifies the following considerations.
Note, that property \eqref{eq:weight fun prop 2} ensures $w(y_{i},y) = 0$ whenever
$d(y_{i},y) \geq \delta$, which we will later combine with
\Cref{lem:MLS error estimate}. On the other hand, property \eqref{eq:weight fun
prop 1} guarantees that the support of the weight function is not too small.
Its necessity will become clear in the proof of \Cref{lem:target function
bounded}. Let us now discuss the nodes.

\begin{definition}\label{def:grid properties} 
Let $y_1,\dots,y_N \in \S^{d-1}$ be nodes on the sphere. 
The fill distance 
\[ h := \sup\limits_{y \in \S^{d-1}} \min\limits_{i=1,\dots,N} d(y_i,y) \] 
is the maximal distance of a point $y \in \S^{d-1}$ to the nearest node $y_i$ and the separation distance 
\[ q \coloneqq \frac12 \min\limits_{i \neq j} d(y_i,y_j) \] 
is half of the minimal distance between $2$ nodes. 
\end{definition}

With the restriction to weight functions of the form \eqref{eq:weight function}
and the reduction of the nodes $Y = \left\{ y_1,\dots,y_N \right\}$ to their fill
distance $h$ and separation distance $q$, we are now prepared to deal with the
Lebesgue constant. Our starting point is the estimate
\begin{align}\label{eq:CSU} 
\sL \left( \sG,w,Y,y \right) \leq \left( \sum\limits_{w(y_i,y)>0} \frac{1}{w(y_i,y)} \vert a_i^{\ast}(y) \vert^2 \right)^{\frac12} \, \left( \sum\limits_{w(y_i,y)>0} w(y_i,y) \right)^{\frac12} .
\end{align} 
We are going to bound both factors individually and we start with the first one, which is exactly the minimal value of the target function \eqref{eq:target function} with respect to the reconstruction constraints \eqref{eq:reconstruction constraints}.

The statement of the following lemma can be found in \cite{Wendland.2001} for the special case $\sG = \sH_L(\S^{d-1})$. 
Here we present a generalization for arbitrary subspaces $\sG \subseteq \sH_L(\S^{d-1})$.
For the proof of this generalization we can almost use the exact same argumentation as in \cite{Wendland.2001}. 
We still think that it is important to present the proof here, such that one can
see where the generalization takes place and also  understand the necessity of the assumptions.

\begin{lemma}\label{lem:target function bounded} 
Let $\phi \colon [0,\pi] \to \lbrack0,\infty)$ be a continuous function
satisfying \eqref{eq:weight fun prop 1}, \eqref{eq:weight fun prop 2} and let
\begin{equation*}
  w(y,z) = \phi \left( \frac{d(y,z)}{\delta} \right)
\end{equation*}
be the corresponding weight function. For every subspace
$\sG \subseteq \sH_L(\S^{d-1})$ there exist constants $h_0,r,C(\phi) > 0$, such
that for arbitrary nodes $y_1,\dots,y_N$ with fill distance $h \leq h_0$ and any
$\delta \geq r \, h$ the optimal value of the minimization problem
\eqref{eq:target function}, \eqref{eq:reconstruction constraints} satisfies
\begin{align}\label{eq:target function bounded}
\sum\limits_{w(y_i,y)>0} \frac{1}{w(y_i,y)} \vert a_i^{\ast}(y) \vert^2 \leq C(\phi) \quad , \, y \in \S^{d-1} .
\end{align} 
\end{lemma}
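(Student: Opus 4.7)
The first observation, where the generalization relative to \cite{Wendland.2001} takes place, is that since $\sG \subseteq \sH_L(\S^{d-1})$ the reproduction constraints \eqref{eq:reconstruction constraints} for $\sG$ are strictly weaker than those for $\sH_L(\S^{d-1})$. Hence every feasible tuple for the $\sH_L(\S^{d-1})$-problem is also feasible for the $\sG$-problem, and the optimal value over $\sG$ is bounded above by the optimal value over $\sH_L(\S^{d-1})$. It therefore suffices to prove the bound in the case $\sG = \sH_L(\S^{d-1})$, and by its rotational invariance we may further assume $y = e_d$.

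Rather than solving the quadratic program, the strategy is to exhibit a single feasible tuple $(\hat a_i)_{i=1}^N$ whose support lies in $\{\,i : d(y_i, e_d) \leq \delta/2\,\}$ and whose nonzero entries are uniformly bounded. To construct it, fix once and for all a template $\zeta_1, \dots, \zeta_K \in B_{1/2}(\vc{0}) \subset \R^{d-1}$, where $K = \dim \sH_L(\S^{d-1})$, such that the points $\pi(\zeta_1), \dots, \pi(\zeta_K)$ are unisolvent with respect to $\sH_L(\S^{d-1})$; such a template exists because unisolvent configurations are generic in $B_{1/2}(\vc{0})^K$. For each $\delta < \pi/2$, rescale to $\pi(\sin(\delta/2)\,\zeta_j) \in C_{\delta/2}(e_d)$. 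Provided $r$ exceeds twice the diameter of the template, the condition $\delta \geq r h$ guarantees that every rescaled template point admits a node $y_{i(j)}$ within geodesic distance $h$, still contained in $C_{\delta/2}(e_d)$.

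A compactness and continuity argument, together with $h \leq h_0$ chosen small enough, shows that the $K \times K$ matrix obtained by evaluating a fixed basis of $\sH_L(\S^{d-1})$ at the perturbed nodes $(y_{i(j)})_{j=1}^K$ remains uniformly well-conditioned across all rotations of the center and all admissible $h$ and $\delta$. Inverting this matrix produces coefficients $(\hat a_{i(j)})_{j=1}^K$ that reproduce $\sH_L(\S^{d-1})$ at $e_d$, with $|\hat a_{i(j)}|$ bounded by a constant $C_1$ depending only on $L$, $d$, and the template; all remaining $\hat a_i$ are set to zero.

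Because every active node lies in the half-cap, the continuity of $\phi$ together with \eqref{eq:weight fun prop 1} ensures $w(y_{i(j)}, e_d) \geq \phi_{\min} := \min_{r \in [0, 1/2]} \phi(r) > 0$. Substituting the feasible solution into the objective then gives the uniform bound
\[
\sum_{w(y_i, y) > 0} \frac{1}{w(y_i, y)} |a_i^{\ast}(y)|^2 \;\leq\; \sum_{j=1}^K \frac{|\hat a_{i(j)}|^2}{w(y_{i(j)}, e_d)} \;\leq\; \frac{K C_1^2}{\phi_{\min}} \;=:\; C(\phi).
\]
The principal technical hurdle is the uniform well-conditioning claim: small geodesic perturbations of a fixed unisolvent template must remain unisolvent in a quantitative way. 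This is handled by combining the nonvanishing of the determinant of the unperturbed interpolation matrix with a standard perturbation estimate and the compactness of $\SO{d}$, which is where the threshold $h_0$ enters.
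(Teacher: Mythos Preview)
Your reduction to $\sG = \sH_L(\S^{d-1})$ is correct and matches the paper's observation. After that, however, the two arguments diverge: the paper does not build the feasible tuple by hand but simply invokes \cite[Theorem~1.4]{KurtJetter.1999}, which already furnishes coefficients $\hat a_i(y)$ that reproduce $\sH_L(\S^{d-1})$, vanish outside $C_{C_1 h}(y)$, and satisfy $\sum_i|\hat a_i(y)|\le C_2$ uniformly. Setting $r=2C_1$ then forces the support into $C_{\delta/2}(y)$, and the bound follows in two lines.

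Your attempt to construct the tuple directly has a genuine gap at the ``uniformly well-conditioned'' step. For any fixed basis of $\sH_L(\S^{d-1})$, the $K\times K$ evaluation matrix at points $\pi(\sin(\delta/2)\zeta_j)$ degenerates to a rank-one matrix as $\delta\to 0$, since all nodes collapse to the center; its condition number is of order $\delta^{-L}$, not bounded. Compactness of $\SO{d}$ and smallness of $h$ do nothing here, because the admissible set $\{\delta\ge rh\}$ is not bounded away from zero. What \emph{is} true is that the particular right-hand side $(g_k(y))_k$ lies in the right subspace so that the solution $\hat a$ stays bounded: after the rescaling $x\mapsto \sin(\delta/2)x$ the reproduction system reduces, to leading order, to a fixed $\delta$-independent system for polynomials on the tangent space. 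Making this rigorous is exactly the content of the Jetter--St\"ockler--Ward local polynomial reproduction (or Wendland's norming-set argument), and it is not a mere ``compactness and continuity'' matter. Either cite that result, as the paper does, or carry out the scaling argument explicitly; as written the proof is incomplete.
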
 

\begin{proof} 
By \cite[Theorem 1.4]{KurtJetter.1999} there exist for every $L \in \N_0$ constants $h_0,C_1,C_2 > 0$, such that for arbitrary nodes $y_1,\dots,y_N \in \S^{d-1}$ with fill distance $h \leq h_0$ there exist coefficients $\hat{a}_i(y) , \, i=1,\dots,N$ with the following properties: 
\begin{enumerate}[topsep=0pt, label=\arabic*)]
  \item\label{item:coeffs prop 1} The reproduction constraints \eqref{eq:reconstruction constraints} are satisfied, i.e.\
        \[ \sum\limits_{i=1}^N \hat{a}_i(y) g(y_i) = g(y) \quad \text{for all} \quad g \in \sH_L(\S^{d-1}) . \]
  \item\label{item:coeffs prop 2} The coefficient functions
        $\hat{a}_{i} \colon \S^{d-1} \to \R, \ i = 1 , \dots , N$ are compactly supported, i.e.\
        \begin{equation*}
          \hat{a}_i(y) = 0 \text{ for all } y \in \S^{d-1} \text{ with } d(y_i,y) > C_1 \, h.
        \end{equation*}
  \item\label{item:coeffs prop 3} The sum of the absolute values is uniformly bounded by $C_2$: $\sum\limits_{i=1}^N \vert \hat{a}_i(y) \vert < C_2$.
\end{enumerate} 
This is where the generalization to arbitrary subspaces of $\sH_L(\S^{d-1})$ takes place. 
The property \ref{item:coeffs prop 1} is satisfied for the whole space $\sH_L(\S^{d-1})$, thus it is certainly satisfied for the subspace $\sG$. 
Since the coefficients $a_i^{\ast}(y)$ of the MLS approximation minimize the target function \eqref{eq:target function} with respect to the reconstruction constraints \eqref{eq:reconstruction constraints}, we obtain: 
\begin{align*} 
  \sum\limits_{w(y_i,y)>0} \frac{\vert a_i^{\ast}(y) \vert^2}{w(y_i,y)} \leq \sum\limits_{w(y_i,y)>0} \frac{\vert \hat{a}_i(y) \vert^2}{w(y_i,y)} .
\end{align*} 
We now choose the constant $r$ as $r = 2 \, C_1$ . 
For arbitrary $\delta \geq r \, h$ this yields
\[ C_1 \, h = \frac{2 \, C_1 \, h}2 = \frac{r \, h}2 \leq \frac{\delta}2 , \] 
such that property \ref{item:coeffs prop 2} of the coefficients $\hat{a}_i(y)$ implies $\hat{a}_i(y) = 0$ for all nodes $y_i$ with $d(y_i,y) > \frac{\delta}2$.
By property \eqref{eq:weight fun prop 1} of the function $\phi$ that defines the weight function $w$, we obtain
\[ \min\limits_{d(y_i,y)\leq\frac{\delta}{2}} w(y_i,y) = \min\limits_{d(y_i,y)\leq\frac{\delta}{2}} \phi \left( \frac{d(y_i,y)}{\delta} \right) = \min\limits_{t \in [0,\frac12]} \phi(t) =: c_{\phi} > 0 . \] 
This allows us to continue the estimation of the target function via 
\[ \sum\limits_{w(y_i,y)>0} \frac{\vert \hat{a}_i(y) \vert^2}{w(y_i,y)} = \sum\limits_{d(y_i,y)\leq\frac{\delta}{2}} \frac{\vert \hat{a}_i(y) \vert^2}{w(y_i,y)} \leq \frac{1}{c_{\phi}} \sum\limits_{d(y_i,y) \leq \frac{\delta}{2}} \vert \hat{a}_i(y) \vert^2 \] 
and putting everything together leads to 
\[ \sum\limits_{w(y_i,y)>0} \frac{\vert a_i^{\ast}(y) \vert^2}{w(y_i,y)} \leq \frac{1}{c_{\phi}} \sum\limits_{d(y_i,y)\leq\frac{\delta}{2}} \vert \hat{a}_i(y) \vert^2 \leq \frac{1}{c_{\phi}} \left( \sum\limits_{d(y_i,y)\leq\frac{\delta}{2}} \vert \hat{a}_i(y) \vert \right)^2 \leq \frac{1}{c_{\phi}} C_2^2 \eqqcolon C(\phi) , \] 
where $C_2$ comes from property \ref{item:coeffs prop 3} of the coefficients $\hat{a}_i(y)$.
\end{proof}

Let us now consider the second factor in \eqref{eq:CSU}, which is the sum of all
(positive) weights $w(y_i,y)$ with respect to the center $y$. For a weight
function of the form \eqref{eq:weight function} we get:
\begin{equation}
  \label{eq:sum of weights}
  \sum\limits_{w(y_i,y)>0} w(y_i,y) = \sum\limits_{w(y_i,y)>0} \phi \left( \frac{d(y_i,y)}{\delta} \right)
  \leq \left\vert \left\{ i \in \{1,\dots,N\} \, \middle\vert \, d(y_i,y) < \delta \right\} \right\vert
  \cdot \max\limits_{t \in \lbrack 0,1 \rbrack} \phi(t) .
\end{equation}
Hence it only remains to show, that under the conditions of \Cref{lem:target
function bounded} the number of nodes $y_i$ that are contained in the spherical
cap $C_{\delta}(y)$ is uniformly bounded for all $y \in \S^{d-1}$. The
following estimate results from geometrical arguments and it can be found in the
proof of \cite[Theorem 3]{Wendland.2001}.

\begin{lemma}\label{lem:bound for number of neighbors} Let
$y_1,\dots,y_N \in \S^{d-1}$ be nodes with separation distance $q$. For every
center $y \in \S^{d-1}$ the number of nodes $y_i$, that are contained in the
spherical cap $C_{\delta}(y)$, is bounded by
\begin{align}\label{eq:bound for number of
neighbors} \left\vert \left\{ i \in \{1,\dots,N\} \, \middle\vert \, y_i \in C_{\delta}(y) \right\} \right\vert \leq \left( \frac {q + \delta}{q} \right)^{d-1} \, \left( \frac{\pi}{2} \right)^{d-2} .
\end{align}
\end{lemma}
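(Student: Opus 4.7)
The plan is to use a standard volume-packing argument: translate the lower bound on pairwise distances into disjointness of small spherical caps around each node, contain all of them inside a slightly larger cap around $y$, and then compare volumes.

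First I would observe that, by the definition of the separation distance, any two distinct nodes $y_i,y_j$ satisfy $d(y_i,y_j) \geq 2q$, so the open caps $C_{q}(y_i)$ are pairwise disjoint. Next, for every node $y_i \in C_\delta(y)$ and every $z \in C_q(y_i)$ the triangle inequality on $\S^{d-1}$ gives $d(z,y) \leq d(z,y_i) + d(y_i,y) < q + \delta$, so $C_q(y_i) \subset C_{q+\delta}(y)$. Letting $N_\delta := \#\{i : y_i \in C_\delta(y)\}$, summing the surface areas yields
\begin{equation*}
  N_\delta \cdot \vol\bigl( C_q(y) \bigr) = \sum_{y_i \in C_\delta(y)} \vol\bigl( C_q(y_i) \bigr) \leq \vol\bigl( C_{q+\delta}(y) \bigr),
\end{equation*}
where rotational invariance of the surface measure on $\S^{d-1}$ is used to make $\vol(C_q(y_i))$ independent of $i$.

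It then remains to estimate the ratio of cap volumes. Writing the surface measure in spherical coordinates centered at $y$, with polar angle $\theta \in [0,\pi]$ and the standard $\S^{d-2}$ angular measure, gives
\begin{equation*}
  \vol\bigl( C_\rho(y) \bigr) = \vol(\S^{d-2}) \int_0^\rho \sin^{d-2}(\theta)\,d\theta,
\end{equation*}
so the common factor $\vol(\S^{d-2})$ cancels in the ratio. For the numerator I would use $\sin \theta \leq \theta$ to get $\int_0^{q+\delta} \sin^{d-2}(\theta)\,d\theta \leq (q+\delta)^{d-1}/(d-1)$. For the denominator I would use the Jordan-type lower bound $\sin \theta \geq \tfrac{2}{\pi}\theta$ valid on $[0,\pi/2]$, which yields $\int_0^q \sin^{d-2}(\theta)\,d\theta \geq (2/\pi)^{d-2}\, q^{d-1}/(d-1)$, provided $q \leq \pi/2$. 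Dividing produces exactly $\left(\tfrac{q+\delta}{q}\right)^{d-1}\cdot (\pi/2)^{d-2}$, matching \eqref{eq:bound for number of neighbors}.

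The main obstacle is essentially bookkeeping about ranges of $\theta$: one has to check that the relevant caps live inside $\S^{d-1}$ (so $q+\delta \leq \pi$) and that the lower estimate $\sin \theta \geq (2/\pi)\theta$ applies on the full interval of integration (so $q \leq \pi/2$). In the asymptotic regime $h \to 0$ relevant for \cref{sec:lebesgue-constant} both conditions are automatically satisfied, and for larger $q$ or $\delta$ the bound \eqref{eq:bound for number of neighbors} becomes trivial (the cap $C_\delta(y)$ then covers a fixed fraction of the sphere and the count is bounded by a constant), so no generality is lost. Beyond this, the argument is a purely geometric volume comparison, with no interaction with the weight function or the ansatz space.
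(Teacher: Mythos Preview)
The paper does not give its own proof of this lemma; it simply records that the estimate ``results from geometrical arguments'' and refers to the proof of \cite[Theorem~3]{Wendland.2001}. Your packing argument---disjoint caps $C_q(y_i)$ inside $C_{q+\delta}(y)$, followed by the volume comparison via $\sin\theta\le\theta$ and $\sin\theta\ge\tfrac{2}{\pi}\theta$---is exactly the standard one and reproduces the stated bound correctly.

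Two small remarks on your closing caveats. The condition $q\le\pi/2$ is automatic whenever $N\ge 2$, since $2q\le\operatorname{diam}(\S^{d-1})=\pi$, so no asymptotic assumption is needed there. For the remaining case $q+\delta>\pi$ you can make the argument fully rigorous rather than calling the bound ``trivial'': simply replace $C_{q+\delta}(y)$ by the whole sphere in the packing step, use $\int_0^\pi\sin^{d-2}\theta\,d\theta\le\pi^{d-1}/(d-1)$, and observe that $\pi\le q+\delta$ then gives $(\pi/q)^{d-1}\le((q+\delta)/q)^{d-1}$, recovering \eqref{eq:bound for number of neighbors} in this range as well.
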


Let us take a closer look at the expression $\nicefrac{(q+\delta)}{q}$.
By \Cref{lem:target function bounded} the constant $\delta$ should be chosen as a sufficiently large multiple of the fill distance $h$, i.e.\ $\delta = R \, h$ for some $R \geq r$, thus
\begin{equation}
  \label{eq:why uniformity matters}
  \frac{q+\delta}{q} = \frac{q+R h}{q} = 1 + R \, \frac{h}{q} .
\end{equation}
The ratio $\nicefrac{h}{q}$ of the fill distance and the separation distance is
essentially the ratio between the radius of the biggest hole in the nodes and
the minimal distance between two nodes.
This can be seen as a measure for how uniformly the nodes are spread, hence we
call $\nicefrac{h}{q}$ the uniformity of the nodes.

\begin{theorem}
  \label{thm:lebesgue constant bounded}
  Let $\phi \colon \lbrack 0,\infty ) \to \lbrack 0,\infty )$ be a continuous
  function satisfying \eqref{eq:weight fun prop 1}, \eqref{eq:weight fun
    prop 2} and let
  \begin{equation*}
    w(y,z) = \phi \left( \frac{d(y,z)}{\delta} \right)
  \end{equation*}
  be the corresponding weight function.
  For every subspace $\sG \subseteq \sH_L(\S^{d-1})$
  and every $c>0$ there exist constants $h_0,r,C(\phi,c)>0$, such that for
  arbitrary node sets $Y = \left\{ y_1,\dots,y_N \right\}$ with fill distance
  $h \leq h_0$ and uniformity $\nicefrac{h}{q} \leq c$ and
  $\delta = R \, h$ with $R \geq r$ the Lebesgue constant is uniformly bounded
  by $C(\phi,c)$
  \begin{equation*}
    \sL \left( \sG,w,Y,y \right) \leq C(\phi,c) , \quad y \in \S^{d-1}.
  \end{equation*}
\end{theorem}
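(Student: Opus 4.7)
The plan is to combine the two main ingredients already prepared, namely \Cref{lem:target function bounded} and \Cref{lem:bound for number of neighbors}, via the Cauchy--Schwarz estimate \eqref{eq:CSU}. The argument should go essentially by bookkeeping, with the uniformity condition $h/q \le c$ entering precisely where it is needed to keep the neighbor count from blowing up.

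First I would start from the Cauchy--Schwarz bound \eqref{eq:CSU}, which expresses $\sL(\sG,w,Y,y)$ as a product of two factors. For the first factor I would simply invoke \Cref{lem:target function bounded}: it provides $h_0, r > 0$ and a constant $C(\phi)$ (depending only on the weight profile $\phi$) such that whenever $h \le h_0$ and $\delta \ge r h$, the weighted sum of $|a_i^{\ast}(y)|^2$ is bounded by $C(\phi)$. Since by assumption $\delta = R h$ with $R \ge r$, this hypothesis holds and the first factor is dominated by $\sqrt{C(\phi)}$, uniformly in $y \in \S^{d-1}$.

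Next I would bound the second factor $\sum_{w(y_i,y)>0} w(y_i,y)$. Using \eqref{eq:sum of weights} this is at most $|\{i : y_i \in C_\delta(y)\}| \cdot \max_{t \in [0,1]} \phi(t)$. Applying \Cref{lem:bound for number of neighbors} and then the identity \eqref{eq:why uniformity matters}, I obtain
\begin{equation*}
  \sum\limits_{w(y_i,y)>0} w(y_i,y)
  \leq \left(1 + R\,\tfrac{h}{q}\right)^{d-1} \left(\tfrac{\pi}{2}\right)^{d-2} \max_{t\in[0,1]} \phi(t).
\end{equation*}
Here the uniformity assumption $h/q \le c$ is crucial: it lets me replace $R\,h/q$ by the constant $R c$, yielding a bound that depends only on $R$, $c$, $d$, and $\phi$, but not on the particular node configuration or the center $y$.

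Combining both factors produces an estimate of the form
\begin{equation*}
  \sL(\sG,w,Y,y)
  \leq \sqrt{C(\phi)} \cdot \left(1 + R c\right)^{(d-1)/2} \left(\tfrac{\pi}{2}\right)^{(d-2)/2} \sqrt{\max_{t\in[0,1]} \phi(t)},
\end{equation*}
which I would collect into a single constant $C(\phi,c)$ (absorbing also the fixed factor $R$, which is determined by $\phi$ via $r$). Since $R \ge r$ is part of the hypothesis, $R$ itself need not appear in the notation of the constant. I do not anticipate a real obstacle: the lemmas in \Cref{sec:lebesgue-constant} have already absorbed the analytic difficulties (existence of good quasi-interpolatory coefficients and the geometric packing bound on the sphere), so the theorem reduces to combining them under the stated hypotheses.
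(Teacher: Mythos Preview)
Your proposal is correct and follows exactly the paper's route: the paper's proof is a single sentence stating that the result follows by combining \eqref{eq:CSU} with \Cref{lem:target function bounded}, \eqref{eq:sum of weights}, \Cref{lem:bound for number of neighbors}, and \eqref{eq:why uniformity matters}, which is precisely what you do. One small remark: your final constant visibly depends on $R$ through $(1+Rc)^{(d-1)/2}$, so strictly speaking $C(\phi,c)$ should be $C(\phi,c,R)$; this imprecision is already present in the paper's statement and is not a defect of your argument.
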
 
\begin{proof}
  This follows directly from combining \eqref{eq:CSU} with \Cref{lem:target
    function bounded}, \eqref{eq:sum of weights} and \Cref{lem:bound for number
    of neighbors} and inserting \eqref{eq:why uniformity matters}.
\end{proof}

We conclude this section by combining the uniform bound for the Lebesgue constant with the results of the previous section.
Recall, that we are interested in the decay of the error $\| f - \sM f \|_\infty$ of the MLS approximation as the number $N$ of nodes goes to infinity. 
Using a packing argument one can easily verify, that for $N \to \infty$ the separation distance $q$ must go to zero. 
If additionally the uniformity $\nicefrac{h}{q}$ is bounded, the fill distance must go to zero too.
This motivates the following corollary about the error of the MLS approximation
for $h \to 0$.

\begin{corollary}
  \label{cor:approx order of MLS} 
  Let $f \in \sC^{L+1}(\S^{d-1})$ and let
  $\left( Y^\ell = \{ y_1^\ell , \dots , y_{N_\ell}^\ell \} \right)_{\ell \in
    \N}$ be a family of node-sets with bounded uniformity
  $\nicefrac{h_\ell}{q_\ell} \leq c , \, \ell \in \N$, such that the fill
  distance $h_\ell$ goes to zero for $\ell \to \infty$. Further let
  $\phi \colon \lbrack 0,\infty) \to \lbrack 0,\infty)$ be a continuous
  function satisfying \eqref{eq:weight fun prop 1}, \eqref{eq:weight fun
    prop 2} and let
  \[ w_\ell (y,z) = \phi \left( \frac{d(y,z)}{\delta_\ell} \right) \]
  be the corresponding weight function.
  We denote the MLS approximation of $f$ with
  respect to the ansatz space $\sH_{L,2}(\S^{d-1})$, the nodes $Y^\ell$ and the
  weight function $w_\ell$ by
  $f^\ell \coloneqq \sM f$. Let $r>0$ be the constant from \Cref{lem:target
    function bounded}, choose $R \geq r$ and set
  $\delta_\ell \coloneqq R \, h_\ell , \, \ell \in \N$. The MLS approximation
  possesses approximation order $\sO \left( h^{L+1} \right)$ for
  $\ell \to \infty$ and $h_\ell \to 0$, i.e.\
  \[ \| f-f^\ell \|_\infty = \sO \left( {h_\ell}^{L+1} \right) . \]
\end{corollary}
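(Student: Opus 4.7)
The plan is to fix an arbitrary center $y \in \S^{d-1}$ and invoke \Cref{lem:MLS error estimate} with $\sG = \sH_{L,2}(\S^{d-1})$ and $w = w_\ell$. Because $\phi$ satisfies \eqref{eq:weight fun prop 2}, the weight $w_\ell(y_i,y)$ vanishes as soon as $d(y_i,y) \geq \delta_\ell$, so the ``$\delta$'' appearing in that lemma is at most $\delta_\ell = R\,h_\ell$. Monotonicity of $E_{\delta}(\sG,f,y)$ in $\delta$ then yields
\begin{equation*}
  |f(y) - f^\ell(y)| \leq E_{\delta_\ell}\bigl(\sH_{L,2}(\S^{d-1}),f,y\bigr) \bigl(1 + \sL(\sH_{L,2}(\S^{d-1}),w_\ell,Y^\ell,y)\bigr).
\end{equation*}

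Next I would bound the two factors separately and uniformly in $y$. For the Lebesgue constant, since $\sH_{L,2}(\S^{d-1}) \subseteq \sH_L(\S^{d-1})$, the uniformity $h_\ell/q_\ell$ is bounded by $c$, and $\delta_\ell = R\,h_\ell$ with $R\geq r$, \Cref{thm:lebesgue constant bounded} provides $h_0 > 0$ and a constant $C(\phi,c)$ such that, for all $\ell$ with $h_\ell \leq h_0$,
\begin{equation*}
  \sL\bigl(\sH_{L,2}(\S^{d-1}),w_\ell,Y^\ell,y\bigr) \leq C(\phi,c) \quad \text{for every } y \in \S^{d-1}.
\end{equation*}
For the local best approximation error, \Cref{thm:main result} gives $E_{\delta_\ell}(\sH_{L,2}(\S^{d-1}),f,y) = \sO(\delta_\ell^{L+1})$ for each fixed $y$.

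The subtle point — and the main obstacle — is that the $\sO$-constant in \Cref{thm:main result} must be independent of $y$; otherwise the combined bound need not be uniform. I would resolve this by inspecting the proof of \Cref{lem:Phi(G) = all polynomials up to degree L}: the constant $K$ there is a supremum of certain $(L+1)$-st derivatives of the pullback $f^{\ast}$ (with $y$ rotated to $e_d$) over a small ball. Since $f \in \sC^{L+1}(\S^{d-1})$ on a compact manifold, all these derivatives are uniformly bounded, and by the rotational invariance of $\sH_{L,2}(\S^{d-1})$ the same ansatz space is available at every $y$. Hence one obtains a single constant $K = K(f,L,d)$ such that
\begin{equation*}
  E_{\delta_\ell}\bigl(\sH_{L,2}(\S^{d-1}),f,y\bigr) \leq K\,\delta_\ell^{L+1} = K R^{L+1}\,h_\ell^{L+1}
\end{equation*}
for every $y \in \S^{d-1}$.

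Combining the two uniform bounds gives, for all sufficiently large $\ell$,
\begin{equation*}
  \|f - f^\ell\|_\infty \leq K R^{L+1} \bigl(1 + C(\phi,c)\bigr)\, h_\ell^{L+1} = \sO\bigl(h_\ell^{L+1}\bigr),
\end{equation*}
which is the claim. Aside from extracting the uniformity of the local approximation constant, the argument is a clean assembly of \Cref{lem:MLS error estimate}, \Cref{thm:main result}, and \Cref{thm:lebesgue constant bounded}.
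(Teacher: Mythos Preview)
Your proof is correct and follows essentially the same route as the paper: invoke \Cref{lem:MLS error estimate}, bound the Lebesgue constant via \Cref{thm:lebesgue constant bounded}, bound the local best approximation error via \Cref{thm:main result}, and multiply. Your treatment is in fact slightly more careful than the paper's, which silently assumes the $\sO$-constant in \Cref{thm:main result} is uniform in $y$; your remark that this follows from compactness of $\S^{d-1}$ and rotational invariance of $\sH_{L,2}(\S^{d-1})$ fills that gap.
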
 

\begin{proof}
  First we recognize, that all conditions of \Cref{thm:lebesgue constant bounded}
  are satisfied. Since $h_\ell$ goes to zero, we may assume that the fill
  distance is smaller than the constant $h_0$ and the requirements with respect
  to the support radius are satisfied too. Thus the
  Lebesgue constant is uniformly bounded by $C(\phi,c)$, i.e.\
  \[ \sL \left( \sH_{L,2}(\S^{d-1}),w_{\ell},Y^\ell,y \right) \leq C(\phi,c) . \]
  Now $\delta_\ell \to 0$ for $\ell \to \infty$ and by \Cref{thm:main result} the error $E_{\delta_\ell} \left( \sH_{L,2}(\S^{d-1}),f,y \right)$ of the local best approximation of $f$ from $\sH_{L,2}(\S^{d-1})$ on the spherical cap $C_{\delta_\ell}(y)$ vanishes with order $\sO \left( \delta_\ell^{L+1} \right)$ for all functions $f \in \sC^{L+1}(\S^{d-1})$ and for all centers $y \in \S^{d-1}$.
  With the error estimate \eqref{eq:error est of MLS} of the MLS approximation we
  finally obtain for all $y \in \S^{d-1}$:
  \begin{align*}
    \vert f(y) - f^{\ell}(y) \vert &\leq E_{\delta_\ell} \left( \sH_{L,2}(\S^{d-1}),f,y \right) \cdot \left( 1 + \sL \left( \sH_{L,2}(\S^{d-1}),w_{\ell},Y^{\ell},y \right) \right) \\
   &\leq E_{\delta_\ell} \left( \sH_{L,2}(\S^{d-1}),f,y \right) \cdot \left( 1 + C(\phi,c) \right) = \left( 1 + C(\phi,c) \right) \cdot \sO \left( \delta_\ell^{L+1} \right) \\
   &= \left( 1 + C(\phi,c) \right) \cdot \sO \left( (R \, h_\ell)^{L+1} \right) = \left( 1 + C(\phi,c) \right) \cdot R^{L+1} \cdot \sO \left( {h_\ell}^{L+1} \right) .
  \end{align*}
\end{proof} 

 
\section{MLS Approximation with Polynomials on the Tangent Space of the
  Sphere}
\label{sec:tangentSpace} 

The most natural way to perform MLS approximation on a manifold is to locally
transfer the problem to the tangent space and use the ansatz space of
multivariate polynomials.
\edited{In contrast to the previous sections the ansatz space is  now
  depending on the evaluation point and on the local projection which is being
  used.

  The approach in this section was already discussed in \cite[Section
  3.1]{Sober.} in the setting of manifold approximation from scattered data,
  i.e.\ the manifold and in particular its tangent space were not given
  explicitly, but instead determined during the approximation process. Theorem
  3.2 from Sober et al.\ is directly applicable to our setting and proves
  approximation order $\sO(h^{L+1})$ if both the manifold and the function
  possess smoothness order $L+1$. In order to keep the assumptions in line with
  the assumptions in the previous sections, we give in the following a separate
  proof for this result in the spherical setting.}

To this end, let us introduce the MLS approximation with polynomials on the
tangent space of the sphere. For now we assume that the center is the north
pole, i.e.\ $y = e_d \in \S^{d-1}$, thus the tangent space is given by
\begin{equation*}
  T_{e_{d}} \S^{d-1} = \left\{ y \in \R^d \, \middle\vert \, y_d = 0 \right\} \cong \R^{d-1} .
\end{equation*}
Further, recall the inverse projection \eqref{eq:inverse projection of
  points} from the northern hemisphere to the ball
\[ \mapping{\inv{\pi}}{C_{\frac{\pi}{2}}(e_d)}{B_1(\vc{0})} , \quad
  \inv{\pi}(y_{1},\dots,y_{d}) = (y_1,\dots,y_{d-1}) . \]
Assume that $w \colon \S^{d-1} \times \S^{d-1} \to \R$ is a weight function and
$y_{1},\dots,y_{N} \in \S^{d-1}$ are nodes, such that
\begin{equation*}
  \min\limits_{p \in \sP_L(\R^{d-1})} \sum\limits_{w(y_i,e_d)>0}
  w(y_i,e_d) \, \vert f(y_i) - \concat{p}{\inv{\pi}} (y_i) \vert^2
\end{equation*}
possesses a unique minimizer $p_{e_{d}} \in \sP_{L}(\R^{d-1})$. Then analogous
to \Cref{def:def mls} we may define the MLS approximation with respect to
polynomials on the tangent space as $\sM f(y) \coloneqq p_{e_{d}}(\vc{0})$.
This can also be interpreted as the MLS approximation with respect to the ansatz
space
\begin{align}\label{eq:tangent ansatz space
  e_d} \concat{\sP_L(\R^{d-1})}{\inv{\pi}} \coloneqq \left\{ \concat{p}{\inv{\pi}} \, \middle\vert \, p \in \sP_L(\R^{d-1}) \right\} ,
\end{align} which is just the composition of all polynomials with $\inv{\pi}$.
Note, that this ansatz space is dependent on the specific choice of the local
projection.

We want to generalize the above approach to arbitrary $y \in \S^{d-1}$. To this
end we rotate the center $y \in \S^{d-1}$ and its neighbors into the north pole
and proceed in the same manner as before.

\begin{definition}\label{def:tangent mls}
  Let a function $f \colon \S^{d-1} \to \C$ and a center $y \in \S^{d-1}$ be
  given and let $\mt{R}_y \in \SO{d}$ be a rotation matrix, such that $y$ gets
  mapped to the north pole, i.e.\ $\mt{R}_y y = e_d$. Further, let
  $y_{1},\dots,y_{N} \in \S^{d-1}$ be nodes and let
  $w \colon \S^{d-1} \times \S^{d-1} \to \R_{\geq 0}$ be a weight function, such
  that the minimization problem
  \begin{align}\label{eq:mls tangent
    space} p_y = \argmin\limits_{p \in \sP_L(\R^{d-1})} \sum\limits_{w(y_i,y)>0} w(y_i,y) \, \vert f(y_i) - \concatt{p}{\inv{\pi}}{\mt{R}_y} (y_i) \vert^2
  \end{align}
  possesses a unique solution $p_{y} \in \sP(\R^{d-1})$.
  Then $\sM f(y) \coloneqq p_y(\vc{0})$ is the MLS approximation of $f$ in $y$
  with respect to polynomials on the tangent space.
\end{definition}

This can be seen as MLS approximation on the sphere with respect to the ansatz space
\begin{align}\label{eq:tangent ansatz
  space} \widetilde{\sP}_L(\R^{d-1},\pi,y) \coloneqq \concatt{\sP_L(\R^{d-1})}{\inv{\pi}}{\mt{R}_y} \coloneqq \left\{ \concatt{p}{\inv{\pi}}{\mt{R}_y} \, \middle\vert \, p \in \sP_L(\R^{d-1}) \right\} ,
\end{align} which now depends on the center $y \in \S^{d-1}$ and on the local
projection $\pi$. Note,
that all following arguments regarding the space
$\widetilde{\sP}_L(\R^{d-1},\pi,y)$ do not depend on the specific matrix
$\mt{R}_y \in \SO{d}$, but only on the property $\mt{R}_y \, y = e_d$.
As discussed after \Cref{def:def mls} the minimization problem \eqref{eq:mls
  tangent space} possesses a unique solution, if and only if the nodes
$\left\{ y_{i} \, \middle\vert \, w(y_{i},y) > 0 \right\}$ with positive weight
are unisolvent with respect to the ansatz space \eqref{eq:tangent ansatz
  space}.

We want to show, that the MLS approximation from \Cref{def:tangent mls} also
attains the approximation order $\sO \left( h^{L+1} \right)$ if the conditions
of \Cref{cor:approx order of MLS} are satisfied. By \Cref{lem:MLS error
  estimate} it is sufficient to check, that the corresponding  ansatz
space \eqref{eq:tangent ansatz space} has local approximation order
$\sO \left( \delta^{L+1} \right)$ for $\delta \to 0$ and that the related
Lebesgue constant can be uniformly bounded. We validate both statements in the
following two lemmata.

\begin{lemma}\label{lem:tangent local approx order L+1} For all functions
  $f \in \sC^{L+1}(\S^{d-1})$ and for all centers $y \in \S^{d-1}$ the error of
  the local best approximation of $f$ on the spherical cap $C_{\delta}(y)$ with
  respect to the ansatz space $\widetilde{\sP}_L(\R^{d-1},\pi,y)$ from
  \eqref{eq:tangent ansatz space} decays with order $L+1$ as $\delta$ goes to
  zero:
  \[ E_{\delta}(\widetilde{\sP}_L(\R^{d-1},\pi,y),f,y) = \sO(\delta^{L+1}) . \]
\end{lemma}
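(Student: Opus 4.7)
The plan is to reduce to the north-pole case by rotational symmetry and then invoke Taylor's theorem directly, exactly as in the proof of \Cref{lem:Phi(G) = all polynomials up to degree L}. The key observation is that here we do not even need the detour through two Taylor expansions, since the ansatz space $\widetilde{\sP}_L(\R^{d-1},\pi,y)$ already contains every pullback of a polynomial of degree at most $L$; in particular it contains the one that equals $T_L(f)$ after rotating.

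First I would reduce to the case $y=e_{d}$. Given an arbitrary center $y\in\S^{d-1}$ and a rotation $\mt{R}_{y}\in\SO{d}$ with $\mt{R}_{y}y=e_{d}$, define $\tilde f \coloneqq f\circ \mt{R}_{y}^{-1}\in\sC^{L+1}(\S^{d-1})$. Since rotations are isometries of the great-circle distance, they map $C_{\delta}(y)$ onto $C_{\delta}(e_{d})$. A function $g$ lies in $\widetilde{\sP}_{L}(\R^{d-1},\pi,y) = \sP_L(\R^{d-1})\circ\pi^{-1}\circ \mt{R}_{y}$ exactly when $g\circ \mt{R}_{y}^{-1}\in\sP_L(\R^{d-1})\circ\pi^{-1}$. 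Therefore
\[
E_{\delta}(\widetilde{\sP}_L(\R^{d-1},\pi,y),f,y)
=E_{\delta}(\sP_L(\R^{d-1})\circ\pi^{-1},\tilde f,e_{d}),
\]
and it suffices to prove the bound for the center $e_{d}$ and an arbitrary $\sC^{L+1}$ function (which I again denote by $f$).

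For the north-pole case I would pick the candidate approximant explicitly: let $p\coloneqq T_{L}(f)\in\sP_{L}(\R^{d-1})$ be the truncated Taylor polynomial of the pullback $f^{\ast}=f\circ\pi$ at $\vc{0}$, and set $g\coloneqq p\circ\pi^{-1}\in\sP_L(\R^{d-1})\circ\pi^{-1}$. For $\delta<\pi/2$ and any $z\in C_{\delta}(e_{d})$ the point $x\coloneqq\pi^{-1}(z)$ satisfies $\|x\|_{2}<\sin\delta$, as shown at the beginning of the proof of \Cref{lem:Phi(G) = all polynomials up to degree L}. Taylor's theorem with the Lagrange form of the remainder applied to $f^{\ast}\in\sC^{L+1}(B_{1}(\vc{0}))$ then yields
\[
|f(z)-g(z)|
=\bigl|f^{\ast}(x)-p(x)\bigr|
=\Biggl|\sum_{\vc{\alpha}\in\N_{0}^{d-1},\,|\vc{\alpha}|=L+1}
\frac{(f^{\ast})^{(\vc{\alpha})}(\xi_{x})}{\vc{\alpha}!}\,\vc{x}^{\vc{\alpha}}\Biggr|
\]
for some $\xi_{x}\in B_{\sin\delta}(\vc{0})$. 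Bounding every $|\vc{x}^{\vc{\alpha}}|\le\sin^{L+1}\delta\le \delta^{L+1}$ and using that the sum of $|(f^{\ast})^{(\vc{\alpha})}|/\vc{\alpha}!$ over $|\vc{\alpha}|=L+1$ is bounded uniformly on $B_{1/2}(\vc{0})$, say by some constant $K$, gives
\[
\sup_{z\in C_{\delta}(e_{d})}|f(z)-g(z)|\le K\,\delta^{L+1},
\]
which proves $E_{\delta}(\sP_L(\R^{d-1})\circ\pi^{-1},f,e_{d})=\sO(\delta^{L+1})$.

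I expect no substantial obstacle: compared to \Cref{lem:Phi(G) = all polynomials up to degree L}, the situation is actually easier because the ansatz element itself is a polynomial in the chart coordinates, so only one Taylor expansion (that of $f^{\ast}$) appears and no second term of the form $(g^{\ast})^{(\vc{\alpha})}(\xi_{g,x})$ must be controlled. The only points that need a line of care are (i) verifying that the rotation-covariance of $\widetilde{\sP}_{L}(\R^{d-1},\pi,y)$ under the change of variable $z\mapsto \mt{R}_{y}^{-1}z$ is independent of the specific choice of $\mt{R}_{y}\in\SO{d}$ with $\mt{R}_{y}y=e_{d}$ (the space does not depend on that choice, as already remarked after \eqref{eq:tangent ansatz space}), and (ii) confirming that $\pi^{-1}$ is indeed well-defined on $C_{\delta}(e_{d})$ for $\delta<\pi/2$, which is immediate from \eqref{eq:inverse projection of points}.
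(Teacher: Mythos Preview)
Your proof is correct and follows essentially the same approach as the paper: reduce to the north pole by rotation and then invoke the Taylor argument from the proof of \Cref{lem:Phi(G) = all polynomials up to degree L}. The only cosmetic differences are that the paper does the rotation step last rather than first, and it phrases the north-pole case as verifying $T_{L}(\sP_{L}(\R^{d-1})\circ\pi^{-1})=\sP_{L}(\R^{d-1})$ and then citing that earlier proof, whereas you write out the Taylor remainder explicitly and note the pleasant simplification that $g^{\ast}=p$ has vanishing remainder, so only the expansion of $f^{\ast}$ is needed.
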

\begin{proof} Let us first consider the case $y = e_d$, where the ansatz space
  is given by
  \begin{equation}
    \label{eq:rotated polynomials wrt north pole}
    \widetilde{\sP}_L(\R^{d-1},\pi,e_{d}) = \left\{ p \circ \inv{\pi} \circ \id \; \middle\vert \; p \in \sP (\R^{d-1}) \right\} = \left\{ p \circ \inv{\pi} \; \middle\vert \; p \in \sP (\R^{d-1}) \right\} .
  \end{equation}
  The pullback function of one element $\concat{p}{\inv{\pi}}$ is
  $\left( \concat{p}{\inv{\pi}} \right)^{\ast} = \concatt{p}{\inv{\pi}}{\pi} = p$
  and hence the truncated Taylor series of order $L$ of the pullback of
  $\concat{p}{\inv{\pi}}$ simplifies to
  \[ T_L( \concat{p}{\inv{\pi}} ) = \sum\limits_{\vc{\alpha} \in \N_0^{d-1} , \, \vert \vc{\alpha} \vert \leq L} \frac{ \left( (\concat{p}{\inv{\pi}})^{\ast} \right)^{(\vc{\alpha})}}{\vc{\alpha}!} \vc{x}^{\vc{\alpha}} = \sum\limits_{\vc{\alpha} \in \N_0^{d-1} , \, \vert \vc{\alpha} \vert \leq L} \frac{ p^{(\vc{\alpha})}}{\vc{\alpha}!} \vc{x}^{\vc{\alpha}} = p . \]
  As a direct consequence we obtain
  $T_L \left( \concat{\sP_L(\R^{d-1})}{\inv{\pi}} \right) = \sP_L(\R^{d-1})$ and
  by the proof of \Cref{lem:Phi(G) = all polynomials up to
    degree L} this verifies the desired approximation order for the north pole for
  $\delta \to 0$:
  \[ E_{\delta}(\concat{\sP_L(\R^{d-1})}{\inv{\pi}},f,e_d) = \sO \left( \delta^{L+1} \right) , \quad f \in \sC^{L+1}(\S^{d-1}) . \]

  Now assume $y \neq e_d$ and let $\mt{R}_y \in \SO{d}$ be a rotation matrix with
  $\mt{R}_y y = e_d$, then we obtain:
  \begin{align*} E_{\delta} \left( \widetilde{\sP}_L(\R^{d-1},\pi,y),f,y \right) &= \inf\limits_{p \in \sP_L(\R^{d-1})} \sup\limits_{z \in C_{\delta}(y)} \vert f(z) - p \circ \pi^{-1} \circ \mt{R}_y(z) \vert \\ &= \inf\limits_{p \in \sP_L(\R^{d-1})} \sup\limits_{z \in C_{\delta}(e_d)} \vert \concat{f}{\inv{\mt{R}}_y}(z) - p \circ \pi^{-1}(z) \vert \\ &= E_{\delta} \left( \concat{\sP_L(\R^{d-1})}{\inv{\pi}}, f \circ \mt{R}^{-1}_y,e_d \right)
  \end{align*} and by the first part of the proof the last
  term decays with order $\sO( \delta^{L+1} )$ for $\delta \to 0$ since
  $f \circ \mt{R}_y^{-1} \in \sC^{L+1}(\S^{d-1})$.
\end{proof}

It remains to give a uniform bound for the Lebesgue constant. By
\Cref{thm:lebesgue constant bounded} it is sufficient to show, that the ansatz
space $\widetilde{\sP}_L(\R^{d-1},\pi,y)$ is contained in the space
$\sH_L(\S^{d-1})$ of spherical harmonics up to degree $L$.
Since a function $\widetilde{p} \in \widetilde{\sP}_L(\R^{d-1},\pi,y)$ is not
defined on the whole sphere, but only on the spherical cap $C_{\frac \pi 2}(y)$,
we must instead show a slightly different statement, namely that $\widetilde{p}$
is the restriction of a spherical harmonic $h \in \sH_{L}(\S^{d-1})$ to the
spherical cap $C_{\frac \pi 2}(y)$.

\begin{lemma}\label{lem:tangent lebesgue constant bounded} Let
  $L \in \N_0$ and $y \in \S^{d-1}$ be given.
  Every function from the ansatz space $\widetilde{\sP}_L(\R^{d-1},\pi,y)$ is the restriction
  of a spherical harmonic of degree $\leq L$ to the domain of this function,
  i.e.\ for every $\widetilde{p} \in \widetilde{\sP}_L(\R^{d-1},\pi,y)$ there
  exists $h \in \sH_{L}(\S^{d-1})$, such that
  \[ \widetilde{p} (y) = h(y) , \quad y \in C_{\frac \pi 2}(y) . \]
\end{lemma}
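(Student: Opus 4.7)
The plan is to reduce the claim to the case $y = e_d$ via rotational invariance, and then identify the pullback $p \circ \pi^{-1}$ with the restriction to the hemisphere of a polynomial in $\sP_L(\R^d)$ whose spherical restriction lies in $\sH_L(\S^{d-1})$ by \Cref{lem:harmonic space contains all spherical polynomials}.

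First I would handle the north pole, where $\widetilde{\sP}_L(\R^{d-1},\pi,e_d) = \concat{\sP_L(\R^{d-1})}{\inv{\pi}}$. Writing an element of $\sP_L(\R^{d-1})$ in the monomial basis, $p = \sum_{\vc{\alpha} \in \N_0^{d-1}, \,|\vc{\alpha}| \leq L} c_{\vc{\alpha}} \vc{x}^{\vc{\alpha}}$, and using that $\inv{\pi}(z) = (z_1, \dots, z_{d-1})$ for $z \in C_{\frac{\pi}{2}}(e_d)$, one obtains
\[
(\concat{p}{\inv{\pi}})(z) \;=\; \sum_{|\vc{\alpha}| \leq L} c_{\vc{\alpha}}\, z_1^{\alpha_1} \cdots z_{d-1}^{\alpha_{d-1}} , \qquad z \in C_{\tfrac{\pi}{2}}(e_d) .
\]
The right-hand side is the restriction to $C_{\frac{\pi}{2}}(e_d)$ of the polynomial $P \in \sP_L(\R^d)$ defined by the same formula in the variables $z_1,\dots,z_d$ (independent of $z_d$). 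By \Cref{lem:harmonic space contains all spherical polynomials} each monomial $z^{(\widetilde{\vc{\alpha}},0)}$ with $|\widetilde{\vc{\alpha}}| \leq L$ restricts on $\S^{d-1}$ to a sum of spherical harmonics of degrees $|\widetilde{\vc{\alpha}}|, |\widetilde{\vc{\alpha}}|-2, \dots$, hence lies in $\sH_L(\S^{d-1})$. Linearity then yields $h := P\big|_{\S^{d-1}} \in \sH_L(\S^{d-1})$, and by construction $h$ agrees with $\concat{p}{\inv{\pi}}$ on $C_{\frac{\pi}{2}}(e_d)$.

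For general $y \in \S^{d-1}$ I would invoke rotational invariance. Pick $\mt{R}_y \in \SO{d}$ with $\mt{R}_y y = e_d$, and let $\widetilde{p} = \concatt{p}{\inv{\pi}}{\mt{R}_y}$ be an element of $\widetilde{\sP}_L(\R^{d-1},\pi,y)$. Applying the north pole result to $p$ gives some $h_0 \in \sH_L(\S^{d-1})$ with $\concat{p}{\inv{\pi}} = h_0$ on $C_{\frac{\pi}{2}}(e_d)$. Since $\mt{R}_y$ maps $C_{\frac{\pi}{2}}(y)$ bijectively onto $C_{\frac{\pi}{2}}(e_d)$, composing yields $\widetilde{p} = \concat{h_0}{\mt{R}_y}$ on $C_{\frac{\pi}{2}}(y)$, and $h := \concat{h_0}{\mt{R}_y}$ belongs to $\sH_L(\S^{d-1})$ because each harmonic space $\Harm_{\ell}(\S^{d-1})$ is rotationally invariant.

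There is no real analytic obstacle here; the only point to watch is that $\concat{p}{\inv{\pi}}$ is defined only on the open hemisphere, so the identification with a spherical harmonic must be made via the ambient polynomial $P$ on $\R^d$ before restricting. Everything else is bookkeeping built on top of \Cref{lem:harmonic space contains all spherical polynomials} and the rotational invariance of the harmonic spaces.
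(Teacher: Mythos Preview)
Your proof is correct and follows essentially the same route as the paper: reduce to the north pole, write $p\circ\pi^{-1}$ explicitly as a polynomial in $z_1,\dots,z_{d-1}$, apply \Cref{lem:harmonic space contains all spherical polynomials} to land in $\sH_L(\S^{d-1})$, and then use rotational invariance of the harmonic spaces for general $y$. The only difference is that you spell out the intermediate ambient polynomial $P\in\sP_L(\R^d)$ and the bijection of caps a bit more explicitly than the paper does.
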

\begin{proof} First assume $y=e_d$. By \eqref{eq:rotated polynomials wrt north
    pole} every function
  $\widetilde{p} \in \widetilde{\sP}_L(\R^{d-1},\pi,e_{d})$ possesses the form
  $\widetilde{p} = p \circ \inv{\pi} , \ p \in  \sP ( \R^{d-1} )$. For
  $y \in C_{\frac \pi 2} (e_{d})$ the function $\widetilde{p}$ can be written as
  \begin{align}\label{eq:projected polynomial is
    polynomial} p \circ \inv{\pi} (y) = p(y_1,\dots,y_{d-1}) = \sum\limits_{\widetilde{\vc{\alpha}} \in \N_0^{d-1} , \, \vert \widetilde{\vc{\alpha}} \vert \leq L} c_{\alpha} (y_1,\dots,y_{d-1})^{\widetilde{\vc{\alpha}}} = \sum\limits_{\vc{\alpha} \in \N_0^d , \, \vert \alpha \vert \leq L , \, \alpha_d=0} c_{\alpha} y^{\vc{\alpha}} .
  \end{align}
  Hence $\widetilde{p}$ is the restriction of a spherical harmonic
  by \Cref{lem:harmonic space contains all spherical polynomials}.

  For $y \neq e_{d}$ the ansatz space
  $\widetilde{\sP}_L(\R^{d-1},\pi,y)$ contains the same
  functions as $\widetilde{\sP}_L(\R^{d-1},\pi,e_{d})$, just
  preceded by some rotation matrix $\mt{R}_{y} \in \SO{d}$ with
  $\mt{R}_{y} y = e_{d}$, see \eqref{eq:tangent ansatz space}.
  Because every harmonic space is rotational invariant this does not affect,
  that $\widetilde{p} \in  \widetilde{\sP}_L(\R^{d-1},\pi,y)$ may
  represented by spherical harmonics on its domain.
\end{proof}

Hence all conditions of \Cref{cor:approx order of MLS} with respect to the
ansatz space are satisfied and we can transfer the result to the MLS
approximation with respect to polynomials on the tangent space.

\begin{corollary}
  \label{cor:tanget_space} The MLS approximation of $f \in \sC^{L+1}(\S^{d-1})$
  in $y \in \S^{d-1}$ with respect to the ansatz space
  \[ \widetilde{\sP}_L(\R^{d-1},\pi,y) = \left\{ p \circ \inv{\pi} \circ \mt{R}_y \, \middle\vert \, p \in \sP(\R^{d-1}) , \mt{R}_y \in \SO{d} , \, \mt{R}_y y = e_d \right\} , \]
  from \eqref{eq:tangent ansatz space} possesses the approximation order
  $\sO \left( h^{L+1} \right)$ for $h \to 0$ under the same constraints as for
  the ansatz space $\sH_{L,2}(\S^{d-1})$, see \Cref{cor:approx order of MLS}.
\end{corollary}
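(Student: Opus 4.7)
The plan is to mirror the proof of \Cref{cor:approx order of MLS} step by step, using \Cref{lem:tangent local approx order L+1} to control the local approximation error and \Cref{lem:tangent lebesgue constant bounded} to control the Lebesgue constant, and then invoking the MLS error estimate \eqref{eq:MLS error estimate}. The new difficulty compared with \Cref{cor:approx order of MLS} is that \Cref{thm:lebesgue constant bounded} is formulated for a fixed subspace $\sG \subseteq \sH_L(\S^{d-1})$, whereas the ansatz space $\widetilde{\sP}_L(\R^{d-1},\pi,y)$ here depends on the evaluation point $y$ and, moreover, its elements are only defined on the spherical cap $C_{\pi/2}(y)$ rather than on all of $\S^{d-1}$.

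First, because the fill distance $h_\ell$ tends to zero, we may assume $\delta_\ell = R\,h_\ell < \pi/2$ for all $\ell$ sufficiently large, so that every node $y_i$ with positive weight $w_\ell(y_i,y)>0$ lies in $C_{\pi/2}(y)$ and the composition $\concatt{p}{\inv{\pi}}{\mt{R}_y}$ is well defined at these nodes. By \Cref{lem:tangent lebesgue constant bounded} and the unique continuation property for spherical harmonics of bounded degree, to each $\widetilde{p} \in \widetilde{\sP}_L(\R^{d-1},\pi,y)$ corresponds a unique $h \in \sH_L(\S^{d-1})$ that restricts to $\widetilde{p}$ on $C_{\pi/2}(y)$; this correspondence is linear, and hence the image $\sG_y \subseteq \sH_L(\S^{d-1})$ is a linear subspace. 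Thus on $C_{\pi/2}(y)$ the ansatz space $\widetilde{\sP}_L(\R^{d-1},\pi,y)$ is pointwise identical to $\sG_y$.

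Next I use the Backus--Gilbert reformulation \eqref{eq:mls backus gilbert}--\eqref{eq:backus gilbert reconstruction constraints}: since the reconstruction constraints only see the ansatz functions evaluated at $y$ and at the nodes $y_i$ with $w_\ell(y_i,y)>0$, and these evaluation points all lie in $C_{\pi/2}(y)$, the coefficient problem with ansatz space $\widetilde{\sP}_L(\R^{d-1},\pi,y)$ coincides with the coefficient problem with ansatz space $\sG_y \subseteq \sH_L(\S^{d-1})$. Therefore the optimal coefficients $a_i^{\ast}(y)$ agree, and in particular
\[
\sL \bigl( \widetilde{\sP}_L(\R^{d-1},\pi,y),w_\ell,Y^\ell,y \bigr) = \sL \bigl( \sG_y,w_\ell,Y^\ell,y \bigr).
\]
Now \Cref{thm:lebesgue constant bounded} applies to every subspace of $\sH_L(\S^{d-1})$ with the same constants $h_0,r,C(\phi,c)$; these constants do not depend on the particular subspace, hence the bound $\sL(\sG_y,w_\ell,Y^\ell,y) \leq C(\phi,c)$ holds uniformly in $y \in \S^{d-1}$.

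Finally, combining this uniform Lebesgue bound with \Cref{lem:tangent local approx order L+1}, which gives $E_{\delta_\ell}(\widetilde{\sP}_L(\R^{d-1},\pi,y),f,y) = \sO(\delta_\ell^{L+1})$ for $f \in \sC^{L+1}(\S^{d-1})$, and inserting both into the MLS error estimate \eqref{eq:MLS error estimate}, we obtain
\[
\vert f(y) - \sM f(y) \vert \leq (1+C(\phi,c)) \cdot \sO(\delta_\ell^{L+1}) = (1+C(\phi,c)) R^{L+1} \cdot \sO(h_\ell^{L+1})
\]
uniformly in $y \in \S^{d-1}$, which is the claim. The main obstacle is the bookkeeping of the second paragraph: making precise that although $\widetilde{\sP}_L(\R^{d-1},\pi,y)$ is not literally a subspace of $\sH_L(\S^{d-1})$, the MLS problem depends only on values at finitely many points inside $C_{\pi/2}(y)$, where \Cref{lem:tangent lebesgue constant bounded} provides an honest identification with a subspace of $\sH_L(\S^{d-1})$ to which \Cref{thm:lebesgue constant bounded} applies with constants that are uniform in $y$.
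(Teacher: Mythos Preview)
Your proof is correct and follows the same approach as the paper, which simply states that the conditions of \Cref{cor:approx order of MLS} are met via \Cref{lem:tangent local approx order L+1} and \Cref{lem:tangent lebesgue constant bounded}. You are in fact more careful than the paper on two points it glosses over: that the ansatz space is only defined on $C_{\pi/2}(y)$ so one must pass to the subspace $\sG_y \subseteq \sH_L(\S^{d-1})$ via the Backus--Gilbert constraints, and that the constants $h_0,r,C(\phi,c)$ in \Cref{thm:lebesgue constant bounded} are indeed independent of the particular subspace (which is visible from the proof of \Cref{lem:target function bounded}, though not from its statement).
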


 
\section{Numerical Comparison}
\label{sec:numerics}

In this section we perform MLS approximation on the $2$-sphere $\S^2$ with
respect to the 3 ansatz spaces of the previous sections: the even,
respectively odd spherical harmonics up to degree $L \in \N$, see
\eqref{eq:even/odd spherical harmonics}, all spherical harmonics up to degree
$L$, and polynomials on the tangent space up to degree $L$, see
\eqref{eq:tangent ansatz space}. The goal is to verify the theoretical
approximation order $\sO \left(h^{L+1}\right)$ for $\delta \to 0$ for all 3
ansatz spaces and analyze the numerical behavior.

We will only present the results for polynomial degree $L=3$. The numerical
experiments for $L=1,2,4$ showed similar characteristics. Higher polynomial
degrees are not well suited for MLS approximation, since the goal is the fast
computation of many local approximations of low degree. For $L=0$ all three
ansatz spaces are identical.

Let us first consider only the spaces $\sH_{3,2}(\S^{2})$ and
$\sH_{3}(\S^{2})$. Let
\begin{equation*}
  Y_{\ell} \coloneqq \left\{ Y_{\ell}^{m} \, \middle\vert \, m = -\ell, \dots, \ell \right\}
\end{equation*}
be an $L^{2}$-orthonormal basis of $\Harm_{\ell}(\S^{2})$, see \cite[Section 5.2]{VMK}.
We utilize the bases
\begin{align*}
  Y_{L} \coloneqq \bigcup\limits_{\ell=0}^{L} Y_{\ell} \, , \qquad \qquad
  Y_{L,2} \coloneqq \bigcup\limits_{\ell \leq L, \, \ell = L \pmod*{2}}^{L} Y_{\ell}
\end{align*}
for $\sH_{L}(\S^{2})$ and $\sH_{L,2}(\S^{2})$, respectively.

By combining \eqref{eq:minimal dimension} and \eqref{eq:dim of sum of poly
  spaces} we obtain $\dim \left( \sH_{3,2} (\S^{2}) \right) = 10$ and from the
decomposition $\sH_{3}(\S^{2}) = \sH_{3,2}(\S^{2}) \oplus \sH_{2,2}(\S^{2})$ we
deduce $\dim \left( \sH_{3}(\S^{2}) \right) = 10 + 6 = 16$. Further we utilize
the weight function
\begin{equation*}
  w \colon \S^2 \times \S^2 \to \lbrack 0,1 \rbrack , \quad w(y,z)
  := \left( \max \left\{ 1 - \frac{d(y,z)}{\delta} , 0   \right\} \right)^{2} ,
\end{equation*}
which satisfies the requirements \eqref{eq:weight function}, \eqref{eq:weight
  fun prop 1} and \eqref{eq:weight fun prop 2}. Regarding \Cref{thm:lebesgue
  constant bounded} we have to choose $\delta = R \, h$ as a sufficiently
large multiple of the fill distance $h$ of the nodes in order to guarantee the
existence of the MLS approximation. On the other hand, from
the proof of \Cref{cor:approx order of MLS} we can read off that the
approximation error grows with $R \to \infty$. For $\sH_{3,2}(\S^2)$ a
reasonable factor is $R=3.5$. As $\sH_3(\S^2)$ has a higher dimension we need
to choose here $R=4.5$.

We use the test function $f \colon \S^{2} \to \R$ from \cite{KurtJetter.1999,
  Wendland.2001} which is a superposition of $5$ exponentials
\begin{equation*}
  f(y) = \sum\limits_{i=1}^{5} f_{i}(y) = \sum\limits_{i=1}^{5} c_{i} \cdot
  \exp \left( {-\alpha_{i} \left( 1 - \langle p_{i},y \rangle \right)} \right) .
\end{equation*}
The parameters can be found in \Cref{tab:table of parameters}. Further a
surface plot of $f$ is provided in \Cref{fig:testfun}.

\begin{figure}[t!]
  \centering
  \begin{minipage}[t]{\textwidth}
    \begin{minipage}[c]{.33\textwidth}
      \centering
      \includegraphics[width=\textwidth]{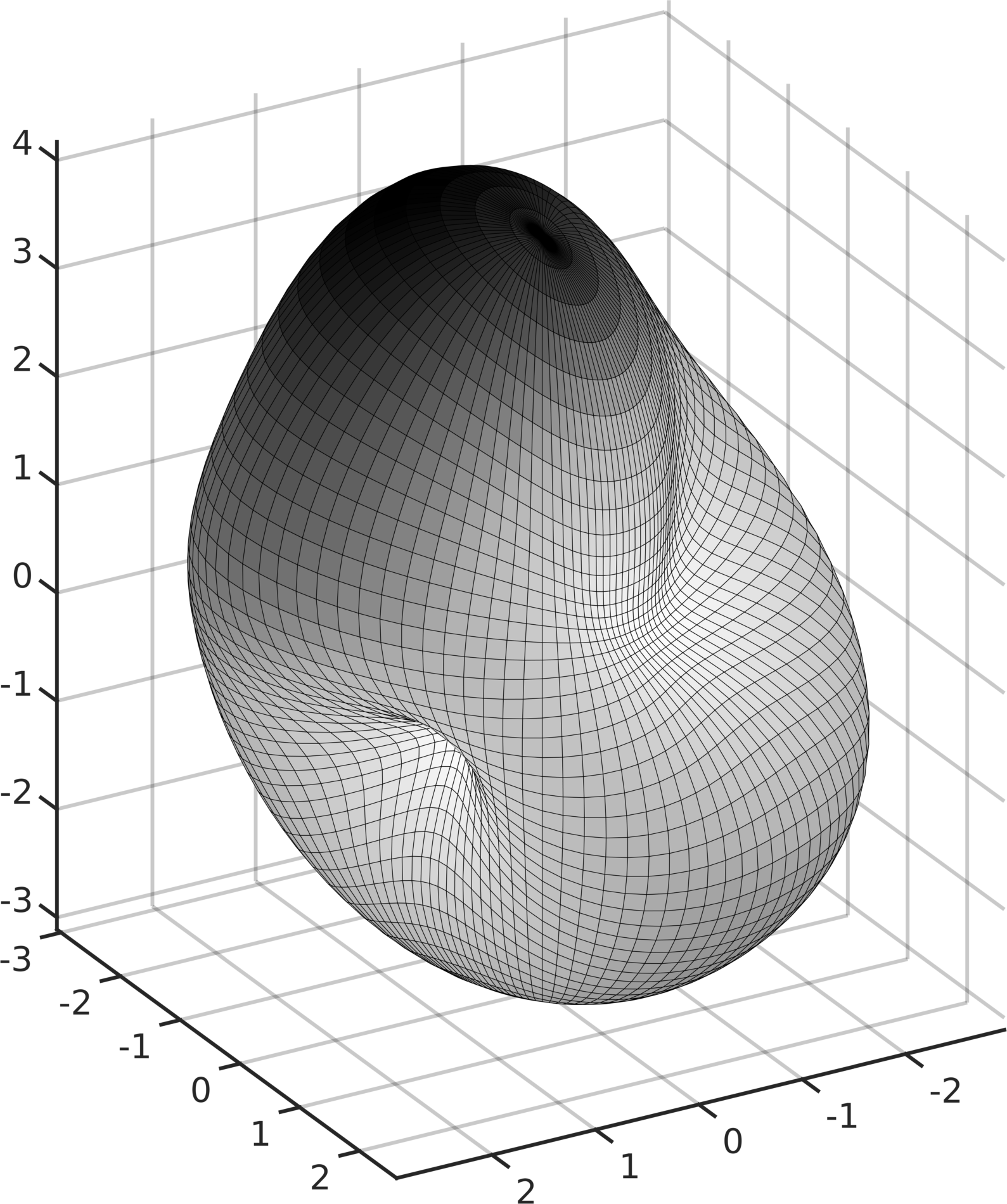}
      \captionof{figure}{Surface Plot of \\ \hphantom{Figure 1:} $(3+f(y)) \cdot y$}
      \label{fig:testfun}
    \end{minipage}
    \hfill
    \begin{minipage}[c]{.66\textwidth}
      \centering
      \captionof{table}{Parameters defining the test function $f$}
      \begin{tabular}[c]{rrrrrrr}
        \label{tab:table of parameters}
        $i$ & $p_{i,1}$ & $p_{i,2}$ & $p_{i,3}$ & $n_{i}$ & $\alpha_{i}$ & $c_{i}$ \\ \midrule
        1 & 0 & 0 & 1 & 1 & 5 & 2 \\
        2 & 0.932039 & 0 & 0.362358 & 1 & 7 & 0.5 \\
        3 & -0.362154 & 0.612280 & 0.696707 & 2 & 6 & -2 \\
        4 & 0.904035 & 0.279651 & -0.323290 & 1 & 5 & -2 \\
        5 & -0.047932 & -0.424684 & -0.904072 & 1 & 2.1 & 0.2
      \end{tabular}
    \end{minipage}
  \end{minipage}
\end{figure}

\begin{figure}[b!]
  \captionsetup{type=table}
  \centering
  \caption{The fill distance $h$ and the separation distance $q$ in degree and
    the resulting uniformity $\nicefrac hq$ for Fibonacci grids with $N = 2n+1$
    nodes, $n \in \{ 5 \cdot 2^{1}, 5 \cdot 2^{2}, \dots, 5 \cdot 2^{26} \}$.}
  \pgfplotstabletypeset[
  columns={size, fill, sep, hom, size, fill, sep, hom},
  display columns/0/.style={select equal part entry of={0}{2}, column name=$N$,
    fixed, dec sep align},
  display columns/1/.style={select equal part entry of={0}{2}, column name=$h$,
    precision=4, fixed, zerofill={true}, dec sep align},
  display columns/2/.style={select equal part entry of={0}{2}, column name=$q$,
    precision=4, fixed, zerofill={true}, dec sep align},
  display columns/3/.style={select equal part entry of={0}{2}, column
    name=$\nicefrac hq$, precision=2, fixed, zerofill={true}, dec sep align},
  display columns/4/.style={select equal part entry of={1}{2}, column name=$N$,
    fixed, dec sep align},
  display columns/5/.style={select equal part entry of={1}{2}, column name=$h$,
    precision=4, fixed, zerofill={true}, dec sep align},
  display columns/6/.style={select equal part entry of={1}{2}, column name=$q$,
    precision=4, fixed, zerofill={true}, dec sep align},
  display columns/7/.style={select equal part entry of={1}{2}, column
    name=$\nicefrac hq$, precision=2, fixed, zerofill={true}, dec sep align}
  ]
  {data/grid.txt}
  \label{tab:grid_properties}
\end{figure}

It remains to construct node-sets with decreasing fill distance $h$ and
bounded uniformity $\nicefrac{h}{q}$ in order to verify the approximation
order $\sO \left( h^4 \right)$ from \Cref{cor:approx order of MLS}. Here we
utilize Fibonacci grids \cite{shepard68} with $2n+1$ nodes,
$n \in \left\{ 5 \cdot 2^{1}, 5 \cdot 2^{1}, \dots, 5 \cdot 2^{26} \right\}$.
The characteristics of these grids can be found in \Cref{tab:grid_properties}.

In order to estimate the error of the MLS approximation we use a discrete 
approximation of the supremum norm
\[ L_{\infty} := \max\limits_{t_i \in T} \vert f(t_i) - \sM f(t_i) \vert
  \approx \sup\limits_{y \in \S^2} \vert f(y) - \sM f(y) \vert , \] where
$T \subset \S^2$ is a set of uniformly distributed random points of
cardinality $\vert T \vert = 10^5$. These error estimates are depicted by the
dashed lines in \Cref{fig:error plots} for the bases $Y_{3}$ and $Y_{3,2}$
and decreasing fill distances.
\edited{

}

For $h>3$ degrees in the case of $Y_{3}$
and for $h>0.2$ degrees in the case of $Y_{3,2}$ the theoretical
approximation order $\sO({h}^4)$ is attained very precisely. For $h<3$ degrees
the basis $Y_{3}$ becomes locally linearly depended which results in a very
badly conditioned Gram matrix, see \Cref{fig:conds}. Since the space
$\mathcal H_{L,2}(\S^{2})$ is much better adopted to local approximation its
basis $Y_{3,2}$ remains stable for smaller values of $h$.
\edited{
  But still, with
  decreasing fill distance the condition of the Gram matrix becomes worse, see
  \Cref{fig:conds}, and numerical instability occurs at $h \approx 0.2$ degrees.
}

\edited{
  \begin{remark}
    If an evaluation point is close to a zero of a basis function, all entries
    of the corresponding row and column of the Gram matrix are close to zero
    too. We thus rescaled the basis functions in all numerical experiments, such
    that the absolute values of the diagonal entries of the Gram matrix are $1$.
  \end{remark}
}

\begin{figure}[t]
  \centering
  \label{fig:conds and err plots}
  \begin{subfigure}[t]{.48\textwidth}
    \centering
    \begin{tikzpicture}
      \begin{axis}[ enlargelimits=false, scale=.95,
        grid = major,
        tick align = outside, xmode = log, ymode = log, xlabel = $h$,
        ymin=1, ymax = 10^22,
        xtick pos = bottom, ytick pos = left, 
        xticklabel={$\pgfmathparse{exp(\tick)}\pgfmathprintnumber[fixed]{\pgfmathresult}^{\circ}$},
        yticklabel style = {rotate=90},
        legend cell align = left,
        legend style={nodes={scale=0.85, transform shape},at={(0,0.3)},anchor = west}]
        \pgfplotstableread{data/conds.txt}\mydata;
        \addplot+[no marks, line join = round,color=blue,style=dashed,thick] table
        [x=filldist, y expr={exp(\thisrow{all_harm})}] {\mydata};
        \addlegendentry{$Y_{3}$}
        \addplot+[no marks, line join = round, color=blue, opacity=0.7, thick] table
        [x=filldist, y expr={exp(\thisrow{even_harm})}] {\mydata};
        \addlegendentry{$Y_{3,2}$}
        \addplot+[no marks, line join = round, color=red, opacity=0.7, thick] table
        [x=filldist, y expr={exp(\thisrow{even_mon_cent})}] {\mydata};
        \addlegendentry{$\sB_{3,2}(y)$}
        \addplot+[no marks, line join = round, color=black, style=dashed, thick] table
        [x=filldist, y expr={exp(\thisrow{tangent})}] {\mydata};
        \addlegendentry{$\widetilde{\sP}_3(\R^{d-1},\inv{\pi},y)$}
      \end{axis} \end{tikzpicture}
    \newsubcap{The worst condition number of the Gram matrices that
      occurred among all evaluation points, for the $4$ different ansatz spaces
      as the fill distance goes to zero.}
    \label{fig:conds}
  \end{subfigure}
  \hfill
  \begin{subfigure}[t]{.48\textwidth}
    \centering
    \begin{tikzpicture}
      \begin{axis}[enlargelimits=false, scale=.95,
        grid = major,
        tick align = outside, xmode = log, ymode = log, xlabel = $h$,
        xtick pos = bottom, ytick pos = left, 
        xticklabel={$\pgfmathparse{exp(\tick)}\pgfmathprintnumber[fixed]{\pgfmathresult}^{\circ}$},
        yticklabel style = {rotate=90},
        legend pos = south east, legend cell align = left, legend
        style={nodes={scale=0.85, transform shape}}]
        \pgfplotstableread{data/errors.txt}\mydata;
        \addplot+[no marks, line join = round,color=blue,style=dashed,thick] table
        [x=filldist, y=all_harm] {\mydata};
        \addlegendentry{$Y_{3}$}
        \addplot+[no marks, line join = round, color=blue, opacity=0.7, thick] table
        [x=filldist, y=even_harm] {\mydata};
        \addlegendentry{$Y_{3,2}$}
        \addplot+[no marks, line join = round, color=red, opacity=0.7, thick] table
        [x=filldist, y=even_mon_cent] {\mydata};
        \addlegendentry{$\sB_{3,2}(y)$}
        \addplot+[no marks, line join = round, color=black, style=dashed, thick] table
        [x=filldist, y=tangent] {\mydata};
        \addlegendentry{$\widetilde{\sP}_3(\R^{d-1},\inv{\pi},y)$}
      \end{axis} \end{tikzpicture}
    \newsubcap{Decay of the $L_{\infty}$ error \eqref{eq:error est of MLS} of the
      MLS approximation of $f$ w.r.t. different ansatz spaces and bases as the
      fill distance goes to zero.}
    \label{fig:error plots}
  \end{subfigure}
\end{figure}

An easy and effective way to obtain an approximation process which remains
stable for even smaller fill distances $h$ is to replace the basis $Y_{3,2}$ of
$\mathcal H_{L,2}(\S^{2})$ by a locally adapted basis of the same space. To this
end we consider the monomial basis $\sB_{3,2}$ from \Cref{lem:basis of G_L}
and utilize for $y \in \S^2$ an arbitrary rotation $\mt{R}_y \in \SO{3}$ that
maps $y$ into the north pole, i.e.\ $\mt{R}_y y = e_3$, in order to define a
local basis
\begin{equation*}
  \sB_{3,2}(y) \coloneqq \left\{ g \circ \mt{R}_y \, \middle\vert \, g \in \sB_{3,2} \right\}
\end{equation*}
of the same space. The corresponding approximation error
$L_\infty$ is depicted by the red line in \Cref{fig:error plots} and decays
all the way down to almost machine accuracy.
\edited{
  Additionally, the condition of the Gram matrices remains constant
  even if the fill distance becomes very small, as depicted in \Cref{fig:conds}.
}

A similar approximation error is obtained when using the local ansatz spaces
$\widetilde{\sP}_3(\R^2,\inv{\pi},y)$ as introduced in
\cref{sec:tangentSpace}. Here we consider the monomial basis \eqref{eq:monomial
  basis} on the tangent space $T_{e_{3}} \S^{2}$ to the sphere at the north
pole $e_{3}$ and use the rotation $\mt{R}_{y}$ and the projection
$\pi \colon T_{e_3} \S^2 \to \S^2$ to derive local ansatz spaces for each $y
\in \S^{2}$. The most significant difference to the previous setting
is that not only the basis changes with the local evaluation point but
the entire ansatz space.
\edited{
  As for the basis $\sB_{3,2}(y)$, the conditions
  of the Gram matrices are not affected by the fill distance.
}

\ \newline\noindent\textbf{Data Availability }
All numerical experiments have been realized using the Matlab Toolbox MTEX. The
corresponding scripts are available
at \url{https://github.com/mtex-toolbox/mtex-paper/tree/master/MLSonSpheres}.



\cleardoublepage


\end{document}